\newtheorem{theorem}{Theorem}
\newtheorem{lem}{Lemma}
\newtheorem{corollary}{Corollary}
\newtheorem{proposition}{Proposition}
\theoremstyle{definition}
\newtheorem{definition}{Definition}
\newtheorem{rem}{Remark}
\newtheorem*{remark}{Remark}
\newtheorem*{remarks}{Remarks}
\newtheorem*{examples}{Examples}
\begin{document}
\begin{frontmatter}

\title{A group action on increasing sequences of set-indexed Brownian
motions}
\author{\inits{A.}\fnm{Arthur}\snm{Yosef}}\email{yusupoa@yahoo.com}
\address{Tel Aviv-Yaffo Academic College, 2 Rabeno Yeruham st., Tel
Aviv-Yaffo, Israel}

\markboth{A. Yosef}{A group action on increasing sequences of
set-indexed Brownian
motions}
\begin{abstract}
We prove that a square-integrable set-indexed stochastic process is a
set-indexed Brownian motion if and only if its projection on all the
strictly increasing continuous sequences are one-parameter
$G$-time-changed Brownian motions. In addition, we study the
``sequence-independent variation'' property for group
stationary-increment stochastic processes in general and for a
set-indexed Brownian motion in particular. We present some
applications.
\end{abstract}
\begin{keyword} Set indexed process\sep Brownian motion\sep increasing path
\MSC[2010] 60G15\sep 60G48\sep 60G60
\end{keyword}
\received{23 February 2015}
%
\revised{29 July 2015}
\accepted{30 July 2015}
\publishedonline{6 August 2015}

\end{frontmatter}

\section{Introduction}

The set-indexed Brownian motion $\{X_{A}:A\in\mathbf{A}\}$ is well
defined and well studied (see \cite{IvMe}). We will mention that the
indexing collection $\mathbf{A}$ is a compact set collection on a
topological space $T$. The choice of the collection $\mathbf{A}$ is
crucial: it must be sufficiently rich in order to generate the Borel
sets of $T$, but small enough to ensure the existence of a continuous
Gaussian process defined on $\mathbf{A}$.

In this paper, we define a group action on the indexing collection
$\mathbf{A}$, and from that we characterize the set-indexed Brownian
motion by using the notion of an increasing path introduced in
\cite{CaWa}. The characterization of a~set-indexed Brownian motion by
group action (Theorem \ref{thm1} in this article, which says that a
square-integrable set-indexed stochastic process is a set-indexed
Brownian motion if and only if its projection on all the strictly
increasing continuous sequences are one-parameter $G$-time-changed
Brownian motions) is the key to most of the proofs in this article. It
is of great importance since it allows us to ``divide and conquer.''
Therefore, many of the proofs for a set-indexed Brownian motion can be
recovered by reducing to a (classical) one-dimensional Brownian motion.
The results that we have extended from a classical Brownian motion to
a~set-indexed Brownian motion involve the following issues: hitting
time, maximum value, reflection principle, exiting from an interval,
time inversion, iterated logarithms, strong law of large numbers,
unboundedness, zero crossing, zero set, nondifferentiability,
path-independent variation, martingale in Brownian motion, and the
like.

The frame of a set-indexed Brownian motion is not only a new step of
generalization of a classical Brownian motion, but it proved a new look
upon a~Brownian motion. In recent years, there have been many new
results related to the dynamical properties of random processes indexed
by a class of sets. Set-indexed processes have many potential areas of
applications. For example: environment (increased occurrence of
polluted wells in a rural area could indicate a geographic region that
has been subjected to industrial waste), astronomy (a cluster of black
holes could be a result of an unobservable phenomenon affecting a
region in space), quality control (an increased rate of breakdowns in a
certain type of equipment might follow the failure of one or more
components), population health (unusually frequent outbreaks of a
disease such as leukemia near a nuclear power plant could signal a
region of possible air or ground contamination), and the like.

Cairoli and Walsh \cite{CaWa} introduced the notion of path-independent
variation (p.i.v) for two-parameter processes. They proved (under some
assumptions) that any strong martingale has the path-independent
variation property. We extend their results to set-indexed strong
martingales.

In the last section, we present some results concerning the
compensators of a set-indexed strong martingale and analyze the concept
of path-independent variation in connection with independent increments
in set-indexed process. We introduce compensators and demonstrate that
the path-independent variation property permits a better understanding
of the Doob--Meyer decomposition.

\section{Preliminaries}

We recall the definitions and notation from \cite{IvMe}.

\addtocounter{definition}{-1}
\begin{definition}\label{def0}
Let ($T,\tau$) be a nonvoid sigma-compact connected topological space.
A~nonempty class $\mathbf{A}$ of compact connected subsets of $T$ is
called an indexing collection if it satisfies the following:
\begin{enumerate}
\item[\rm(1)] $\varnothing\in\mathbf{A}$. In addition, there is an
    increasing sequence ($B_{n}$) of sets in $\mathbf{A}$ such that
    $T={\xch{\bigcup}{\cup}}_{n=1}^{\infty}B_{n}^{\circ}$.

\item[\rm(2)] $\mathbf{A}$ is closed under arbitrary intersections,
    and if $A,B\in\mathbf{A}$ are nonempty$,$ then $A\cap B$ is
    nonempty. If ($A_{i}$) is an increasing sequence in
    $\mathbf{A}$ and if there exists $n$ such that $A_{i}\subseteq
    B_{n}$ for every $i$, then $\overline{{\xch{\bigcup}{\cup}}
    _{i}A_{i}}\in\mathbf{A}$.

\item[\rm(3)] $\sigma(\mathbf{A})=\mathbf{B}$ where $\mathbf{B}$ is
    the collection of Borel sets of $T$.\vadjust{\goodbreak}
\end{enumerate}
\end{definition}

\begin{remarks}
(a) Note that any collection of sets closed under intersections is a
semilattice with respect to the partial order of the inclusion.

(b) Definition \ref{def0} implies that a space $T$ cannot be discrete
and that $\mathbf{A}$ is at least a continuum.
\end{remarks}

\begin{examples}
(a) The classical example is $T=\Re_{+}^{2}$ and
$\mathbf{A}=\mathbf{A}(\Re_{+}^{2})=\{[0,x]:x\in\Re_{+}^{2}\}$ (This
example can be extended to $T=\Re_{+}^{d}$ and
$\mathbf{A}(\Re_{+}^{d})=\{[0,x]:x\in\Re_{+}^{d}\}$, which will give
rise to a sort of $2^{d}$-sides process).

(b) The example (a) may be generalized as follows. Let $T=\Re_{+}^{2}$
and take $\mathbf{A}$ (or $\mathbf{A}(Ls))$ to be the class of compact
\textit{lower sets}, i.e. the class of compact subsets $A$ of $T$
satisfying $t\in A$ implies $[0,t]\subseteq A$.
\end{examples}

\begin{definition}\label{def1}
Let $(\varOmega,F,P)$ be a complete probability space equipped with an
$\mathbf{A}$-indexed filtration $\{F_{A}:A\in\mathbf{A}\}$ that
satisfies the following conditions:
\begin{enumerate}
\item[\rm(i)] for~all $A\in\mathbf{A}$, we have $F_{A}\subseteq F$,
    and $F_{A}$ contains the $P$-null sets.
\item[\rm(ii)] for all $A,B\in\mathbf{A}$, if $A\subseteq B$, then
    $F_{A}\subseteq F_{B}$.
\item[\rm(iii)] $F_{{\xch{\bigcap}{\cap}} A_{i}}={\xch{\bigcap}{\cap}} F_{A_{i}}$
    for any decreasing sequence $\{A_{i}\}$ in $\mathbf{A}$ (for
    consistency, in what follows, if $T\notin\mathbf{A}$, we define
    $F_{T}=F$).
\end{enumerate}
\end{definition}

We will need other classes of sets generated by $\mathbf{A}$. The first
is $\mathbf{A}(\mathbf{u})$, which is the class of finite unions of
sets in $\mathbf{A}$. We remark that $\mathbf{A}(\mathbf{u})$ is itself
a lattice with the partial order induced by set inclusion. Let
$\mathbf{C}$ consist of all the subsets of $T$ of the form
$C=A\backslash B$, $A\in\mathbf {A}$, $B\in\mathbf{A}(\mathbf{u})$. If
$C\in\mathbf{C(u)}\backslash \mathbf{A}$ ($\mathbf{C(u)}$ is the class
of finite unions of sets in $\mathbf {C})$, then we denote
\[
\mathbf{G}_{C}^{\ast}=\underset{A\in\mathbf
{A}(\mathbf{u}),A\cap C=\varnothing}{{\xch{\bigvee}{\vee}}}F_{A}.
\]

In addition, let $A^{\prime}$ be any finite subsemilattice of
$\mathbf{A}$ closed under intersection. For $A\in A^{\prime}$, define
the left neighborhood of $A$ in $A^{\prime}$ to be the set
$C_{A}=A\backslash{\xch{\bigcup}{\cup}}_{B\in A^{\prime},B\subset A}B$. We note that
${\xch{\bigcup}{\cup}}_{A\in A^{\prime}}A={\xch{\bigcup}{\cup}}_{A\in A^{\prime}}C_{A}$ and that
the latter union is disjoint. The sets in $A^{\prime}$ can always be
numbered in the following way: $A_{0}=\varnothing^{\prime}$
($\varnothing^{\prime}={\xch{\bigcap}{\cap}}_{n}{\xch{\bigcap}{\cap}}
_{A\in\mathbf{A}_{n},A\neq\varnothing}A$; note that $\varnothing
^{\prime}\neq\varnothing$), and given $A_{0},\dots,A_{i-1}$, we choose
$A_{i}$ to be any set in $A^{\prime}$ such that $A\subset A_{i}$
implies that $A=A_{j}$ for some $j=1,\dots,i-1$. Any such numbering
$A^{\prime}=\{A_{0},\dots,A_{k}\}$ will be called ``consistent with the
strong past'' (i.e., if $C_{i}$ is the left neighborhood of $A_{i}$ in
$A^{\prime}$, then $C_{i}={\xch{\bigcup}{\cup}}
_{j=0}^{i}A_{j}\backslash{\xch{\bigcup}{\cup}}_{j=0}^{i-1}A_{j}$ and $A_{j}\cap
C_{i}=\varnothing$ for all $j=0,\dots,i-1$, $i=1,2,\dots,k$).

Any $\mathbf{A}$-indexed function that has a (finitely)
additive extension to $\mathbf{C}$ will be called additive
(and is easily seen to be additive on $\mathbf{C(u)}$ as well). For
stochastic processes, we do not necessarily require that each sample
path be additive, but the additivity will be imposed in an almost sure
sense:
\begin{definition}\label{def2}
A set-indexed stochastic process $X=\{X_{A}:A\in\mathbf{A}\}$ is
additive if it has an (almost sure) additive extension to $\mathbf{C}$:
$X_{\varnothing}=0$, and if $C,C_{1},C_{2}\in\mathbf {C}$ with
$C=C_{1}\cup C_{2}$ and $C_{1}\cap C_{2}=\varnothing$, then almost
surely $X_{C_{1}}+X_{C_{2}}=X_{C}$. In particular, if $C\in\mathbf{C}$
and $C=A\backslash{\xch{\bigcup}{\cup}}_{i=1}^{n}A_{i}$, $A,A_{1},\dots,A_{n}\in
\mathbf{A}$, then almost surely $X_{C}=X_{A}-\sum_{i=1}^{n}X_{A\cap
A_{i}}+\sum_{i<j}X_{A\cap A_{i}\cap A_{j}}-\dots+(-1)^{n}X_{A\cap
{\xch{\bigcap}{\cap}}_{i=1}^{n}A_{i}}$.
\end{definition}

We shall always assume that our stochastic processes
are additive. We note that a process with an (almost sure) additive
extension to $\mathbf{C}$ also has an (almost sure) additive extension
to $\mathbf{C(u)}$.

\begin{definition}\label{def3}
Let $(G,\cdot)$ be a group. The group $G$ will be called a permutation
group on $[a,b]$ if $G=\{\pi :[a,b]\rightarrow\lbrack a,b] \mid\pi$ is
a one-to-one and onto function$\}$, and we denote this group by
$S_{[a,b]}$ (i.e., $S_{[a,b]}$ is the class of all the bijection
functions from $[a,b]$ to $[a,b]$).
\end{definition}

\begin{definition}\label{def4}
A positive measure $\sigma$ on ($T,\mathbf{B}$) is called strictly
monotone on~$\mathbf{A}$ if $\sigma _{\varnothing ^{\prime}}=0$ and
$\sigma_{A}<\sigma_{B}$ for all $A,B\in\mathbf {A}$ such that
$A\subsetneq B$. The collection of these measures is denoted by
$M(\mathbf{A})$.
\end{definition}

The classical examples for this definition are the Lebesgue measure or
Radon measure when $T=\Re_{+}^{d}$ and
$\mathbf{A}=\mathbf{A}(\Re_{+}^{d})$.

\begin{definition}\label{def5}
Let $\sigma\in M(\mathbf{A})$, and let $(G,\cdot)$ be a group. A group
action $\ast$ of $(G,\cdot)$ on $\mathbf{A}$ is defined by $g\ast(A\cup
B)=g\ast A\cup g\ast B$, $g\ast(A\backslash B)=g\ast A\backslash g\ast
B$ for all $A,B\in\mathbf{A}$ and $g\in G$, and there exists
$\eta:G\rightarrow\Re_{+}$ such that $\sigma(g\ast A)=\eta(g)\sigma
(A)$ for all $A\in\mathbf{A}$ and $g\in G$.
\end{definition}

The classical examples are the following:
\begin{enumerate}
\item[\rm(a)] Let $G=(\Re_{+}^{d},\cdot)$ and
    $\mathbf{A}=\mathbf{A}(\Re_{+}^{d})=\{[0,x]:x\in\Re_{+}^{d}\}$.
    Then a group action is defined by $g\ast\lbrack0,t]=[0,g\cdot
    t]=[0,g_{1}t_{1}]\times\lbrack0,g_{2}t_{2}]\times\cdots\times
    \lbrack 0,g_{d}t_{d}]$, $\sigma(g\ast\lbrack
    0,t])=g_{1}g_{2}\ldots g_{n}\sigma([0,t])$ for all
    $g=(g_{1},\dots ,g_{d})\in G$ and
    $t=(t_{1},\dots,t_{d})\in\mathbf{\Re}_{+}^{d}$.
\item[\rm(b)] Let $G=(S_{[0,\infty)},\circ)$ and
    $\mathbf{A}=\mathbf{A}(\Re_{+}^{d})=\{[0,x]:x\in\Re_{+}^{d}\}$.
    Then a group action is defined by
    $\pi\ast\lbrack0,t]=[0,\pi\circ t_{1}]\times\cdots\times[0,\pi
    \circ t_{d}]$ for all $\pi\in S_{[0,\infty)}$ and
    $t=(t_{1},\dots,t_{d})\in\mathbf{\Re}_{+}^{d}$.
\end{enumerate}

\begin{definition}\label{def6}
Let $I\subseteq\Re$, and let $A=\{A_{\alpha}\}_{\alpha\in I}$ be
increasing sequence in $\mathbf{A(u)}$.
\begin{enumerate}
\item[\rm(a)] The sequence $A$ is called ``strictly increasing'' if
    $A_{\alpha}\subsetneq A_{\beta}$ for all $\alpha,\beta\in I$
    such that $\alpha<\beta$.
\item[\rm(b)] If $I=[a,b]$, then the sequence $A$ is called a
    ``continuous sequence'' if
    $A_{s}=\overline{{\xch{\bigcup}{\cup}}_{u<s}A_{u}}={\xch{\bigcap}{\cap}}_{v>s}A_{v}$ for
    all $s\in(a,b)$ and $A_{a}={\xch{\bigcap}{\cap}}_{v>a}A_{v}$,
    $A_{b}=\overline{{\xch{\bigcup}{\cup}} _{u<b}A_{u}}$.
\end{enumerate}

Given a set-indexed stochastic process $X$ and increasing sequence
$\{A_{\alpha}\}_{\alpha\in\lbrack a,b]}$ in $\mathbf{A(u)}$, we define
a process $Y$ indexed by $[a,b]$ as follows:
$Y_{s}=X_{A_{s}}=X_{s}^{A}$ for all $s\in\lbrack a,b]$.

A set-indexed stochastic process $X$ is called outer-continuous if $X$
is finitely additive on $\mathbf{C}$ and for any decreasing sequence
$\{A_{n}\}\in\mathbf{A}$, $X_{{\xch{\bigcap}{\cap}}_{n}A_{n}}=\lim_{n}X_{A_{n}}$ and
is called inner-continuous if for any increasing sequence $\{A_{n}\}\in
\mathbf{A}$ such that $\overline{{\xch{\bigcup}{\cup}}_{n}A_{n}}=A\in\mathbf{A}$,
$X_{A}=\lim_{n}X_{A_{n}}$.
\end{definition}

\begin{lem}[\cite{IvMe}]\label{lem1}
Let $A^{\prime}=\{\varnothing^{\prime}=A_{0},\dots,A_{k}\}$ be any
finite subsemilattice of $\mathbf{A}$ equipped with a numbering
consistent with the strong past. Then there exists a continuous
(strictly) flow $f:[0,k]\rightarrow \mathbf{A(u)}$ such that the
following are satisfied:
\begin{enumerate}
\item[\rm(i)] $f(0)=\varnothing^{\prime}$, $f(k)={\xch{\bigcup}{\cup}}_{j=0}^{k}A_{j}$.\vadjust{\eject}
\item[\rm(ii)] Each left neighborhood $C$ generated by $A^{\prime}$ is
of the form $C=f(i)\backslash f(i-1)$, $1\leq i\leq k$.
\item[\rm(iii)] If $C=f(t)\backslash f(s)$, then $C\in\mathbf{C(u)}$
and $F_{f(s)}\subseteq\mathbf{G}_{C}^{\ast}$ (for the definition of
a~continuous flow,
see \cite{IvMe}).
\end{enumerate}
\end{lem}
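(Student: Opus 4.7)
The plan is to build the flow piecewise on the unit intervals $[i-1,i]$, $i=1,\dots,k$, and then check the three properties. At the integer nodes I would set
\[
f(i) \;=\; \bigcup_{j=0}^{i} A_j, \qquad i=0,1,\dots,k,
\]
so that $f(0)=A_0=\varnothing'$ and $f(k)=\bigcup_{j=0}^{k}A_j$, giving (i) immediately. Because the numbering of $A'$ is consistent with the strong past, the left neighborhood $C_i$ of $A_i$ satisfies $C_i=\bigcup_{j=0}^{i}A_j\setminus\bigcup_{j=0}^{i-1}A_j=f(i)\setminus f(i-1)$, which will yield (ii) once the interpolation is in place.

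To produce the interpolation on $[i-1,i]$ I would use the fact that $\mathbf{A(u)}$ admits continuous monotone paths between any comparable pair: concretely, fix $\sigma\in M(\mathbf{A})$ and parametrize a strictly increasing family $\{E_t\}_{t\in[i-1,i]}$ in $\mathbf{A(u)}$ with $E_{i-1}=f(i-1)$, $E_i=f(i)$, and $E_t\setminus E_{i-1}\subseteq C_i$ for all $t$ (so that the ``growth'' takes place entirely inside the left neighborhood $C_i$); such a family exists from the sigma-compact/connected structure of $(T,\tau)$ together with the outer- and inner-continuity assumptions on the indexing collection built into Definition~\ref{def0}. Gluing these pieces gives a strictly increasing continuous $f:[0,k]\to\mathbf{A(u)}$ whose image between two consecutive integers lies entirely inside one left neighborhood, which is exactly what is needed for (ii) and for the first half of (iii): any $C=f(t)\setminus f(s)$ is a finite union of slices of the left neighborhoods $C_{s+1},\dots,C_{\lceil t\rceil}$ with partial pieces at the endpoints, hence belongs to $\mathbf{C(u)}$.

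The filtration statement in (iii) then follows from the numbering being consistent with the strong past: if $C=f(t)\setminus f(s)$ and $A_j\subseteq f(s)$, then $A_j$ appears among $A_0,\dots,A_{\lfloor s\rfloor}$, and by that consistency $A_j\cap C_m=\varnothing$ for every $m>j$, hence $A_j\cap C=\varnothing$. Since $F_{f(s)}$ is generated (as a $\sigma$-algebra) by the $F_{A_j}$ with $A_j\subseteq f(s)$, and each such $F_{A_j}$ is contained in $\bigvee_{A\in\mathbf{A(u)},\,A\cap C=\varnothing}F_A=\mathbf{G}^{\ast}_C$, the inclusion $F_{f(s)}\subseteq\mathbf{G}^{\ast}_C$ is forced.

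The main obstacle, in my view, is not the combinatorics (which is driven essentially by the ``consistent with the strong past'' numbering) but the existence of the continuous interpolation $\{E_t\}$ inside $\mathbf{A(u)}$ taking values strictly between $f(i-1)$ and $f(i)$. Establishing this rigorously requires a careful use of the topological hypotheses on $(T,\tau,\mathbf{A})$ from Definition~\ref{def0} -- sigma-compactness, connectedness, and closure of $\mathbf{A}$ under monotone limits -- to produce a one-parameter strictly monotone family filling out the left neighborhood $C_i$ continuously; once this is available, the rest of the argument is bookkeeping.
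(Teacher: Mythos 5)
The paper offers no proof of this lemma: it is imported verbatim from \cite{IvMe}, so the only thing to compare against is the construction in that reference, and your skeleton (nodes $f(i)=\bigcup_{j=0}^{i}A_j$, interpolation inside each left neighborhood, gluing) is indeed the standard one used there. The combinatorial parts of your argument are fine: (i) is immediate, (ii) follows from the numbering being consistent with the strong past, and membership of $f(t)\setminus f(s)$ in $\mathbf{C(u)}$ is automatic once $f(t),f(s)\in\mathbf{A(u)}$ are nested, since a difference of two sets of $\mathbf{A(u)}$ is a finite union of sets $A\setminus B$ with $A\in\mathbf{A}$, $B\in\mathbf{A(u)}$.

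The genuine gap is exactly where you locate it, but your proposed repair does not work. You assert that the continuous, strictly increasing interpolating family $\{E_t\}_{t\in[i-1,i]}$ exists ``from the sigma-compact/connected structure of $(T,\tau)$ together with the outer- and inner-continuity assumptions on the indexing collection built into Definition~\ref{def0}.'' Definition~\ref{def0} contains no inner- or outer-continuity assumptions --- in this paper those notions (Definition~\ref{def6}) are properties of \emph{processes}, not of $\mathbf{A}$ --- and sigma-compactness, connectedness, closure under intersections, and closure under monotone unions do not by themselves yield a one-parameter, continuous, strictly monotone family in $\mathbf{A(u)}$ filling out a left neighborhood; one can build indexing collections satisfying (1)--(3) of Definition~\ref{def0} in which no such continuum of intermediate sets exists. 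In \cite{IvMe} this step rests on further structural axioms on the indexing collection (separability from above, the approximating finite sub-semilattices $\mathbf{A}_n$, and the requirement that each $A\in\mathbf{A}$ be reachable from $\varnothing'$ by a continuous increasing $\mathbf{A}$-valued path), which is why the lemma is stated there under a richer definition of ``indexing collection'' than the abbreviated one reproduced here. Without invoking those hypotheses the central existence step remains unproved. A minor additional point: the filtration inclusion in (iii) does not need your detour through the $A_j$ (which in any case misdescribes $F_{f(s)}$ when $s$ is not an integer, since then $f(s)$ strictly contains $\bigcup_{j\le\lfloor s\rfloor}A_j$); it is immediate because $f(s)\in\mathbf{A(u)}$ and $f(s)\cap C=\varnothing$, so $F_{f(s)}\subseteq\mathbf{G}_{C}^{\ast}$ by the very definition of $\mathbf{G}_{C}^{\ast}$.
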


\begin{lem}\label{lem2}
Let $A^{\prime}=\{\varnothing ^{\prime}=A_{0},\dots,A_{k}\}$ be any
finite subsemilattice of $\mathbf{A}$ equipped with a numbering
consistent with the strong past.
\begin{enumerate}
\item[\rm(a)] Then there exists a strictly increasing and
    continuous sequence
    \[
    B^{(k)}=\bigl\{B_{\alpha}^{(k)}\bigr\}_{\alpha\in\lbrack0,k]}
    \]
    in $\mathbf{A(u)}$ such that the following are satisfied:
\begin{enumerate}
\item[\rm(i)] $B_{0}^{(k)}=\varnothing^{\prime}$,
    $B_{k}^{(k)}={\xch{\bigcup}{\cup}}_{j=0}^{k}A_{j}$.
\item[\rm(ii)] Each left neighborhood $C$ generated by
    $A^{\prime}$ is of the form $C=B_{i}^{(k)}\backslash
    B_{i-1}^{(k)}$, $1\leq i\leq k$.
\item[\rm(iii)] If $C=B_{t}^{(k)}\backslash B_{s}^{(k)}$, then
    $C\in\mathbf{C(u)}$ and $F_{B_{s}^{(k)}}\subseteq
    \mathbf{G}_{C}^{\ast}$.
\end{enumerate}
\item[\rm(b)] Then there exists a strictly increasing and
    continuous sequence
    \[
    B=\{B_{\alpha}\}_{\alpha\in\lbrack0,\infty)}
    \]
    in $\mathbf{A(u)}$ such that the following are satisfied:
\begin{enumerate}
\item[\rm(i)] $B_{0}=\varnothing^{\prime}$, $B_{k}={\xch{\bigcup}{\cup}}_{j=0}^{k}A_{j}$.
\item[\rm(ii)] Each left neighborhood $C$ generated by
    $A^{\prime}$ is of the form $C=B_{i}\backslash B_{i-1}$,
    $1\leq i\leq k$.
\item[\rm(iii)] If $C=B_{t}\backslash B_{s}$, then
    $C\in\mathbf{C(u)}$ and
    $F_{B_{s}}\subseteq\mathbf{G}_{C}^{\ast}$.
\end{enumerate}
\end{enumerate}
\end{lem}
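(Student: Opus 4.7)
The plan is to derive Lemma~\ref{lem2} essentially as a repackaging of Lemma~\ref{lem1}, observing that its probabilistic content (item (iii)) is in fact automatic once the geometric construction is in place.

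For part~(a), I would apply Lemma~\ref{lem1} directly to $A^{\prime}$ to obtain a continuous (strictly increasing) flow $f:[0,k]\to \mathbf{A(u)}$ and then set $B_{\alpha}^{(k)}:=f(\alpha)$. Items (i) and (ii) of Lemma~\ref{lem2}(a) are verbatim translations of the corresponding items of Lemma~\ref{lem1}. For (iii), given $C=B_{t}^{(k)}\setminus B_{s}^{(k)}$ with $0\le s<t\le k$, I would observe that $B_{s}^{(k)}\in\mathbf{A(u)}$ and $B_{s}^{(k)}\cap C=\varnothing$, so $F_{B_{s}^{(k)}}$ is one of the $\sigma$-algebras indexing the join that defines $\mathbf{G}_{C}^{\ast}$; hence $F_{B_{s}^{(k)}}\subseteq \mathbf{G}_{C}^{\ast}$ follows for free. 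Strict monotonicity and the continuity identities $B_{\alpha}^{(k)}=\overline{\bigcup_{u<\alpha}B_{u}^{(k)}}=\bigcap_{v>\alpha}B_{v}^{(k)}$ required by Definition~\ref{def6}(b) are direct consequences of the continuous-flow property of $f$.

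For part~(b), I would extend $B^{(k)}$ from $[0,k]$ to $[0,\infty)$. Using Definition~\ref{def0}(1), I would fix an increasing sequence $(D_{n})\subseteq \mathbf{A}$ with $T=\bigcup_{n}D_{n}^{\circ}$. Since $B_{k}^{(k)}=\bigcup_{j=0}^{k}A_{j}$ is compact, a standard compactness argument yields an index $n_{1}$ with $B_{k}^{(k)}\subsetneq D_{n_{1}}$. Then, inductively for each $m\ge 0$, I would apply part~(a) to the finite subsemilattice $\{\varnothing^{\prime},D_{n_{1}+m},D_{n_{1}+m+1}\}$, extract the piece of the resulting sequence that runs from $D_{n_{1}+m}$ to $D_{n_{1}+m+1}$, and reparametrize it affinely onto $[k+m,k+m+1]$. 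Concatenating all these pieces with $B^{(k)}$ produces $B=\{B_{\alpha}\}_{\alpha\in[0,\infty)}$. Properties (i) and (ii) are inherited blockwise from $B^{(k)}$, and (iii) is again automatic by the same $B_{s}\in\mathbf{A(u)}$, $B_{s}\cap(B_{t}\setminus B_{s})=\varnothing$ observation, which works even when $s$ and $t$ lie in different blocks.

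The main point requiring care will be that the concatenation remains a strictly increasing continuous sequence in the sense of Definition~\ref{def6}(b). Strict monotonicity across each junction $\alpha=k+m$ follows from $D_{n_{1}+m}\subsetneq D_{n_{1}+m+1}$, and the left- and right-limit identities reduce to checking that the two one-sided limits of $B_{\alpha}$ at $\alpha=k+m$ agree with the common value $D_{n_{1}+m}$ (and at $\alpha=k$ with $B_{k}^{(k)}$), which holds by construction of each block. Beyond this essentially topological matching, no further obstacle appears: property (iii) is structural, automatic from the definition of $\mathbf{G}_{C}^{\ast}$ and the disjointness $B_{s}\cap(B_{t}\setminus B_{s})=\varnothing$.
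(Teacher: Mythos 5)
Your part (a) is exactly the paper's argument: take the flow $f$ of Lemma~\ref{lem1} and set $B_{\alpha}^{(k)}=f(\alpha)$; your observation that (iii) is automatic from the definition of $\mathbf{G}_{C}^{\ast}$ (since $B_{s}^{(k)}\in\mathbf{A(u)}$ and $B_{s}^{(k)}\cap C=\varnothing$) is correct and is already the content of Lemma~\ref{lem1}(iii). Your part (b) is also the same idea as the paper's, just made explicit: the paper enlarges the subsemilattice and notes that the resulting flows $B^{(k)}$, $B^{(k+1)}$ agree on $[0,k]$, then sets $B_{\alpha}=B_{\alpha}^{([\alpha]+1)}$, whereas you concatenate blocks built from the exhausting sequence of Definition~\ref{def0}(1). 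Both are legitimate.

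One concrete defect in your part (b) as written: your first appended block runs from $D_{n_{1}}$ to $D_{n_{1}+1}$ on $[k,k+1]$, but the sequence you are extending ends at $B_{k}^{(k)}=\bigcup_{j=0}^{k}A_{j}$, which you only know satisfies $B_{k}^{(k)}\subsetneq D_{n_{1}}$. So at $\alpha=k$ the right limit is $D_{n_{1}}$ while the value is $B_{k}^{(k)}$, and continuity (indeed well-definedness of the concatenation) fails at that junction; your claim that the one-sided limits at $\alpha=k$ agree with $B_{k}^{(k)}$ ``by construction'' does not hold for the construction you describe. The fix is easy and stays within your framework: apply part (a) to the enlarged subsemilattice $A^{\prime}\cup\{D_{n_{1}}\}$ (numbered consistently with the strong past, with $D_{n_{1}}$ last), which produces a flow on $[0,k+1]$ that agrees with $B^{(k)}$ on $[0,k]$ and ends at $\bigcup_{j}A_{j}\cup D_{n_{1}}=D_{n_{1}}$; then your subsequent blocks attach correctly. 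You should also note that the exhausting sequence $(B_{n})$ of Definition~\ref{def0}(1) is only assumed increasing, so you must pass to a subsequence along which the inclusions are strict in order to get strict monotonicity of the concatenated sequence (the paper is equally silent on this point).
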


\begin{proof}
(a) It is clear from Lemma \ref{lem1} by setting $B_{i}^{(k)}=f(i)$,
$1\leq i\leq k$.

(b) Notice that for each $k$, $B^{(k)}=B^{(k+1)}$ on
    $[0,k]$. Then we can define the sequence $B=\{B_{\alpha
    }\}_{\alpha\in\lbrack0,\infty)} $ in $\mathbf{A(u)}$ by
    $B_{\alpha}=B_{\alpha}^{(\left[ \alpha\right] +1)} $ for all
    $\alpha$.
\end{proof}

\begin{rem}\label{rem1}
Similarly to the construction performed in
Lemma 2, we can prove that for all increasing sequences
$\{B_{n}\}_{n=1}^{\infty}\in\mathbf{A(u)} $, there exists a strictly
increasing and continuous sequence $\{A_{\alpha
}\}_{\alpha\in\lbrack0,\infty)}$ in $\mathbf{A(u)}$ such that $A_{n}=B_{n}$.
\end{rem}

\section{A characterization of a set-indexed Brownian motion by sequences}

\begin{definition}\label{def7}
Let $\sigma\in M(\mathbf{A})$. We say that an $\mathbf{A}$-indexed process $X$ is a~Brownian motion with variance
$\sigma$ if $X$ can be extended to a finitely additive process on
$\mathbf{C(u)}$ and if for disjoint sets
$C_{1},\dots,C_{n}\in\mathbf{C}$, $X_{C_{1}},\dots,X_{C_{n}}$ are
independent zero-mean Gaussian random variables with variances
$\sigma_{C_{1}},\dots,\sigma_{C_{n}}$, respectively.

For any $\sigma\in M(\mathbf{A})$, there exists a set-indexed Brownian
motion with variance $\sigma$~\cite{IvMe}.
\end{definition}

\begin{definition}\label{def8}
(a) Let $X=\{X_{t}:t\geq0\}$ be a stochastic process, and let $\ast$ be
a group action of $(G,\cdot)$ on $\mathbf{\Re}_{+}$. The process $X$ is
called a $G$-time-changed Brownian motion if there exists $g\in G$ such
that $X^{g}=\{X_{g\ast t}:t\geq0\}$ is a~Brownian motion.\vadjust{\goodbreak}

(b) Let $X=\{X_{A}:A\in\mathbf{A}\}$ be a set-indexed process,
$\{A_{\alpha}\}_{\alpha\in\lbrack0,\infty)}$ be an increasing sequence
in $\mathbf{A(u)}$, and $\ast$ be a group action of $(S_{[0,\infty
)},\circ)$ on $\mathbf{\Re}_{+}$. The process $X^{A}$ (see Definition
\ref{def6}) is called a $G$-time-changed Brownian motion if there
exists $\pi\in S_{[0,\infty )}$ such that $X^{\pi,A}=\{X_{\pi\ast
A_{\alpha}}:\alpha\in\lbrack
0,\infty)\}=\{X_{A_{\pi(\alpha)}}:\alpha\in\lbrack0,\infty)\}$ is a
Brownian motion.
\end{definition}
The characterization of a set-indexed Brownian motion by a group action
on a sequence (Theorem \ref{thm1}) is very important and is the key to
most of the proofs in this part of the paper. It is of great importance
since it allows us to ``divide and conquer.'' Therefore, many
properties of a set-indexed Brownian motion can be recovered by
reducing them to a (classical) one-dimensional Brownian motion. Theorem
\ref{thm1} further says that a square-integrable set-indexed stochastic
process is a set-indexed Brownian motion if and only if its projections
on all the strictly increasing continuous sequences by a group action
are one-parameter time-changed Brownian motions.
\begin{theorem}[Characterization of a set-indexed Brownian motion by a
group action on sequences]\label{thm1}
Let $X=\{X_{A}:A\in\mathbf{A}\}$ be a square-integrable set-indexed
stochastic process. Suppose that
there exists a
group action $\ast$ of $(S_{[0,\infty)},\circ)$ on $\mathbf{A}$.
Let $\sigma\in M(\mathbf{A})$. Then $X$ is a set-indexed Brownian motion
with variance $\sigma$ if and only if  the process
$X^{A}=\{X_{A_{\alpha}}:\alpha\in\lbrack0,\infty)\}$ is an
$S_{[0,\infty)}$-time-changed
Brownian motion for all strictly increasing and continuous sequences $\{A_{\alpha}\}_{\alpha\in\lbrack0,\infty)}$ in $\mathbf{A(u)}$. In
other words (by Definition \ref{def8}), for all strictly increasing and
continuous sequences $\{A_{\alpha}\}_{\alpha\in\lbrack0,\infty)}$
in $\mathbf{A(u)}$, there exists $\pi\in S_{[0,\infty)}$ such that
$X^{\pi,A}=\{X_{\pi\ast A_{\alpha}}:\alpha\in\lbrack0,\infty
)\}=\{X_{A_{\pi(\alpha)}}:\alpha\in\lbrack0,\infty)\}$ is a
Brownian motion.
\end{theorem}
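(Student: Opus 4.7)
The plan is to prove the two directions separately. The forward direction ($\Rightarrow$) is a direct construction of the reparametrizing bijection, while the backward direction ($\Leftarrow$) exploits Lemma~\ref{lem2} to reduce the required joint Gaussianity on disjoint $\mathbf{C}$-sets to a one-dimensional Brownian increment computation.

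For the forward direction, assume $X$ is a set-indexed Brownian motion with variance $\sigma$ and fix a strictly increasing continuous sequence $\{A_\alpha\}_{\alpha\in[0,\infty)}$ in $\mathbf{A(u)}$. Since $\sigma$ is strictly monotone on $\mathbf{A}$ and the sequence is strictly increasing and continuous, the map $\beta\mapsto\sigma(A_\beta)$ is a strictly increasing continuous function, so I can choose $\pi\in S_{[0,\infty)}$ to be a bijection with $\sigma(A_{\pi(\alpha)})=\alpha$. Then $\{X_{A_{\pi(\alpha)}}\}$ is a centered Gaussian process with variance $\alpha$ at time $\alpha$. For $\alpha_1<\alpha_2$, additivity on $\mathbf{C(u)}$ gives $X_{A_{\pi(\alpha_2)}}-X_{A_{\pi(\alpha_1)}}=X_{A_{\pi(\alpha_2)}\setminus A_{\pi(\alpha_1)}}$, and for a consecutive ordered family of times the corresponding differences of sets are pairwise disjoint in $\mathbf{C(u)}$; by Definition~\ref{def7} these increments are then independent Gaussian with the correct variances $\alpha_2-\alpha_1$. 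Sample-path continuity follows from the continuity of the covariance $(\alpha_1,\alpha_2)\mapsto\min(\alpha_1,\alpha_2)$ via Kolmogorov's criterion, so the process is a standard Brownian motion.

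For the backward direction I must show that for every disjoint family $C_1,\ldots,C_n\in\mathbf{C}$ the vector $(X_{C_1},\ldots,X_{C_n})$ is centered jointly Gaussian with independent coordinates of variances $\sigma(C_1),\ldots,\sigma(C_n)$. The approach is to gather all the $\mathbf{A}$-sets entering the decompositions of the $C_i$'s into a finite subsemilattice $A'=\{A_0=\varnothing',A_1,\ldots,A_k\}$ consistent with the strong past. Lemma~\ref{lem2}(b) then supplies a strictly increasing continuous sequence $\{B_\alpha\}_{\alpha\in[0,\infty)}$ in $\mathbf{A(u)}$ whose integer-indexed left neighborhoods $C'_j=B_j\setminus B_{j-1}$ refine the $C_i$'s, so each $C_i$ is a disjoint union of certain $C'_j$'s. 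Applying the hypothesis to $\{B_\alpha\}$ produces $\pi\in S_{[0,\infty)}$ such that $\{X_{B_{\pi(\alpha)}}\}$ is a Brownian motion, hence its increments over ordered finite time sets are independent centered Gaussian with variances equal to the time spacings. Translating back via additivity of $X$ and the identity $X_{C_i}=\sum_j X_{C'_j}$ delivers the required joint law.

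The main obstacle is matching the abstract $\pi$ supplied by the hypothesis to the canonical reparametrization $\pi^{-1}(\beta)=\sigma(B_\beta)$, which is exactly what is needed to identify each Brownian increment $\alpha_j-\alpha_{j-1}$ with $\sigma(C'_j)$. I would pin $\pi$ down by combining the Brownian identity $\mathrm{Var}(X_{B_{\pi(\alpha)}})=\alpha$ with the strict monotonicity and continuity of both $\sigma$ on $\mathbf{A(u)}$ and of $\{B_\alpha\}$, forcing $\pi^{-1}$ to be a strictly increasing continuous bijection that coincides with $\sigma\circ B_\cdot$. Once $\pi$ is forced into this canonical form, the rest of the backward direction is a bookkeeping application of additivity on $\mathbf{C(u)}$ together with the conditional independence provided by condition~(iii) of Lemma~\ref{lem2}(b).
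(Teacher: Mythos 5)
Your overall architecture coincides with the paper's: the forward direction reparametrizes by $\theta(\alpha)=\sigma(A_{\alpha})$ and takes $\pi=\theta^{-1}$, and the backward direction reduces, via Lemma~\ref{lem2}, to showing that the left neighborhoods $C_{i}=A_{i}\backslash A_{i-1}$ of a finite subsemilattice carry independent $N(0,\sigma(C_{i}))$ increments read off from a one-parameter Brownian motion. Both halves follow the paper's route, with your forward direction somewhat more carefully spelled out (disjointness of the consecutive difference sets, continuity via Kolmogorov's criterion).

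The genuine problem is the step you yourself flag as ``the main obstacle,'' and your proposed repair does not close it. The hypothesis in the backward direction only asserts that \emph{some} $\pi\in S_{[0,\infty)}$ makes $X^{\pi,B}$ a Brownian motion; from this you get $\mathrm{Var}(X_{B_{\beta}})=\pi^{-1}(\beta)$, which is \emph{a} strictly increasing continuous function of $\beta$, but nothing in the hypothesis identifies it with $\sigma(B_{\beta})$ --- strict monotonicity and continuity of both functions do not force two such functions to coincide. Concretely, a set-indexed Brownian motion with variance $2\sigma$ (or any other $\mu\in M(\mathbf{A})$) satisfies the right-hand condition verbatim yet is not a Brownian motion with variance $\sigma$; the increments you extract then have variances $\pi^{-1}(j)-\pi^{-1}(j-1)$, not $\sigma(C'_{j})$. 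To make the equivalence work one must tie $\pi$ to $\sigma$ in the hypothesis (e.g.\ require $\pi=(\sigma\circ B_{\cdot})^{-1}$, the canonical choice from your forward direction) or weaken the conclusion to ``Brownian motion with respect to some variance measure.'' You have in fact located exactly the soft spot that the paper's own proof passes over silently (it writes $\sigma(C_{i})=\sigma(A_{i})-\sigma(A_{i-1})$ and then asserts the variances are $\sigma(C_{i})$ without justification), but ``forcing $\pi^{-1}$ to coincide with $\sigma\circ B_{\cdot}$'' is an assertion, not an argument, and as stated it is not available from the hypothesis.
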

\vspace*{-6pt}
\begin{proof}({\it if})
Suppose that $X$ is a set-indexed Brownian motion with variance~$\sigma
$. Define $\theta:\Re_{+}\rightarrow\Re_{+}$by
$\theta(\alpha)=\sigma(A_{\alpha})$ ,$\alpha\in\lbrack0,\infty )$. The
function $\theta$ is strictly increasing and continuous because $A$ is
strictly increasing and continuous. Since $\sigma\in M(\mathbf{A})$,
$\theta$ is invertible. Let $\pi(\alpha)=\theta ^{-1}(\alpha)$; $\pi$
is continuous, and $\sigma(A_{\pi(\alpha )})=\alpha$. Then $\pi\in
S_{[0,\infty)}$, and  $X^{\pi ,A}=\{X_{\pi\ast
A_{\alpha}}:\alpha\in\lbrack0,\infty
)\}=\{X_{A_{\pi(\alpha)}}:\alpha\in\lbrack0,\infty)\}$ is a Brownian
motion.

({\it only if}) Suppose that for all strictly continuous sequences
$\{A_{\alpha}\}_{\alpha\in\lbrack0,\infty)}$ in $\mathbf{A(u)}$, there
exists $\pi\in S_{[0,\infty)}$ such that $X^{\pi,A}$ is a Brownian
motion. It must be shown that if $\{C_{1},\dots,C_{k}\}\in\mathbf{C}$
are disjoint, then $X_{C_{1}},\dots,X_{C_{k}}$ are independent normal
random with variances $\sigma(C_{1}),\dots,\sigma(C_{k})$,
respectively. Without loss of generality, we may assume that the sets
$\{C_{1},\dots,C_{k}\}$ are the left neighborhoods of the
subsemilattice $A^{\prime}$ of $\mathbf{A}$ equipped with a numbering
consistent with the strong past. By Lemma 2 there exists a strictly
increasing and continuous sequence
$\{A_{\alpha}\}_{\alpha\in\lbrack0,\infty)}$ in $\mathbf{A(u)}$ such
that each left neighborhood generated by $A^{\prime}$ is of the form
$C_{i}=A_{i}\backslash A_{i-1}$, $1\leq i\leq k$. Thus, $X^{A}$ is an
$S_{[0,\infty ]}$-time-changed Brownian motion such that $X_{C_{i}}
=X_{A_{i}}-X_{A_{i-1}}$ and $\sigma(C_{i})=\sigma(A_{i})-\sigma
(A_{i-1})$; therefore, $X_{C_{1}},\dots,X_{C_{k}}$ are independent
normal random with variances $\sigma(C_{1}),\dots,\sigma(C_{k})$,
respectively.
\end{proof}\vspace*{-6pt}
\begin{corollary}\label{cor1}
Let $X=\{X_{A}:A\in\mathbf{A}\}$ be a square-integrable set-indexed stochastic process
with $X_{\varnothing^{\prime}}=0$ that is inner- and outer-continuous. Let
$\sigma\in M(\mathbf{A})$. Then $X$ is a set-indexed Brownian motion with variance
$\sigma$ if and only if for all strictly continuous sequences
$\{A_{\alpha}\}_{\alpha\in\lbrack0,\infty)}$ in
$\mathbf{A}(\mathbf{u})$, the process $X^{A}$ has independent
increments\vadjust{\eject} and there exists $\pi\in S_{[0,\infty)}$ such that
$X^{\pi,A}=\{X_{A_{\pi(\alpha)}}:\alpha\in\lbrack0,\infty)\}$
has stationary increments. (The definition and more details about
independent increments and stationary increments can be found in
\cite{IvMe}.)
\end{corollary}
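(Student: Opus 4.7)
The plan is to reduce Corollary~\ref{cor1} to Theorem~\ref{thm1} by showing that, under the inner/outer-continuity assumption, the condition ``$X^{\pi,A}$ is a Brownian motion for some $\pi \in S_{[0,\infty)}$'' appearing in Theorem~\ref{thm1} is equivalent to the pair ``$X^A$ has independent increments and $X^{\pi,A}$ has stationary increments for some $\pi$.'' Since Theorem~\ref{thm1} already characterizes set-indexed Brownian motions via the first condition, the work is purely to convert it to the second.

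For the direction assuming $X$ is a set-indexed Brownian motion with variance $\sigma$, Theorem~\ref{thm1} delivers a $\pi \in S_{[0,\infty)}$ such that $X^{\pi,A}$ is a classical Brownian motion, whence stationarity of its increments is immediate. To obtain independent increments of $X^A$ itself, I would invoke the additive extension of Definition~\ref{def2}: for $s<t$, $X^A_t - X^A_s = X_{A_t \setminus A_s}$, and disjoint subintervals of $[0,\infty)$ yield disjoint set-differences in $\mathbf{C(u)}$, which are independent by Definition~\ref{def7}.

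For the converse direction, I fix a strictly increasing continuous sequence $\{A_\alpha\}$ in $\mathbf{A(u)}$ and aim to exhibit a $\pi \in S_{[0,\infty)}$ for which $X^{\pi,A}$ is a Brownian motion, so that Theorem~\ref{thm1} applies. The natural candidate is the monotonic reparametrization $\pi_0 = \theta^{-1}$ with $\theta(\alpha) = \sigma(A_\alpha)$, mirroring the ``if'' part of Theorem~\ref{thm1}; strict monotonicity of $\sigma$ on $\mathbf{A}$ and continuity of the sequence make $\theta$ a homeomorphism of $[0,\infty)$. With this $\pi_0$, independent increments of $X^A$ pass to $X^{\pi_0,A}$ (since order is preserved), and the stationary-increments property assumed for some $\pi$ can be recast via $\pi_0$ using $\sigma(A_{\pi_0(t)} \setminus A_{\pi_0(s)}) = t-s$. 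Inner/outer continuity of $X$ combined with the continuity of $A$ and of $\pi_0$ gives continuity of the one-parameter process $X^{\pi_0,A}$. A continuous, square-integrable real-valued process starting from $X^{\pi_0,A}_0 = X_{\varnothing'} = 0$ with independent stationary increments is, by the classical characterization of continuous Lévy processes, a Brownian motion with drift; the drift vanishes once the zero-mean condition built into the stationary-increments definition of \cite{IvMe} is invoked.

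The step I expect to be most delicate is reconciling the hypothesis ``there exists $\pi$ such that $X^{\pi,A}$ has stationary increments'' with the specific order-preserving $\pi_0$ above: without order preservation, independent increments of $X^A$ do not transfer transparently to $X^{\pi,A}$, so the bulk of the work lies in showing that the stationarity furnished by an arbitrary bijection $\pi$ can be replaced by the stationarity of $X^{\pi_0,A}$. I would handle this by exploiting the strict monotonicity of $\sigma$ on $\mathbf{A}$ and the flexibility afforded by Remark~\ref{rem1} to choose and reparametrize the sequence $\{A_\alpha\}$ so that the $\sigma$-equipartition underlying $\pi_0$ aligns with the intervals on which stationarity was asserted.
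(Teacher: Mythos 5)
Your proposal follows essentially the same route as the paper: both directions reduce to Theorem~\ref{thm1}, with the converse obtained by combining continuity of $X^{A}$ (deduced from inner- and outer-continuity) with the classical characterization of a continuous process having independent and stationary increments as a (time-changed) Brownian motion. You are in fact more explicit than the paper, which declares the forward direction ``obvious'' and does not discuss the point you rightly flag as delicate, namely reconciling the arbitrary bijection $\pi$ in the hypothesis with the order-preserving reparametrization $\pi_0=\theta^{-1}$ needed to transfer independent increments and to match the variance $\sigma$.
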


\begin{proof}({\it if}) Obvious.

({\it only if}) Suppose that for all strictly continuous sequences
$\{A_{\alpha}\}_{\alpha\in\lbrack0,\infty)}$ in
$\mathbf{A}(\mathbf{u})$, the process $X^{A}$ has independent
increments and there exists $\pi\in S_{[0,\infty)}$ such that
$X^{\pi,A}=\{X_{A_{\pi (\alpha)}}:\alpha\in\lbrack0,\infty)\}$ has
stationary increments. Since $X$ is inner- and outer-continuous,
$X^{A}$ is continuous (see \cite{IvMe}). The process $X^{A}$ has
independent increments, and there exists $\pi\in S_{[0,\infty)}$ such
that $X^{\pi,A}$ has stationary increments; therefore, $X^{A}$ is an
$S_{[0,\infty)}$-time-changed Brownian motion for all strictly
continuous sequences $\{A_{\alpha}\}_{\alpha\in\lbrack0,\infty)}$ in
$\mathbf{A}(\mathbf{u})$. Thus, from Theorem~\ref{thm1} we conclude
that $X$ is a set-indexed Brownian motion with variance~$\sigma$.
\end{proof}

\begin{definition}[\cite{IvMe}] \label{def9} Let $X=\{X_{A}:A\in
\mathbf{A}\}$ be an integrable additive set-indexed stochastic process
adapted with respect to filtration $F=\{F_{A}:A\in\mathbf{A}\}$. The
process $X$  is said to be:
\begin{enumerate}
\item A $\mathbf{C}$-strong martingale (or in short notation, a
    strong martingale) if for all \mbox{$C\in\mathbf{C}$}, we have
    $E[X_{C}|\mathbf{G}_{C}^{\ast}]=0$;

\item A martingale if for any $A,B\in\mathbf{A}$ such that
$A\subseteq B$, we have $E[X_{B}|F_{A}]=X_{A}$.
\end{enumerate}
\end{definition}

For studies of different kinds of martingales, see \cite{Kh,MeNu,Za}.

In particular, if $T=\Re_{+}^{2}$ and $\mathbf{A}=\mathbf{A}(\Re
_{+}^{2})$ then $X$ is said to be a strong
martingale-$\mathbf{\Re}_{+}^{2}$ if $X$ is adapted, vanishes on the
axes, and $E[X((z,z^{\prime}])|F_{z}^{1}\vee F_{z}^{2}]=0$ for all
$z\leq z^{\prime}$, where $[z,z^{\prime }]=[s,s^{\prime}]\times\lbrack
t,t^{\prime}]$, $F_{z}^{1}={{\xch{\bigvee}{\vee}}}_{v}F_{sv}$,
$F_{z}^{2}={{\xch{\bigvee}{\vee}}}_{u}F_{ut}$,
$z=(s,t),z^{\prime}=(s^{\prime},t^{\prime})$. (This definition and
additional explanation can be found in \cite{CaWa}).

\begin{remark}
Under some hypotheses, we can define $\langle X\rangle$ to be the
compensator associated with the submartingale $X^{2}$. The definition
and more details regarding $\langle X\rangle$ can be found in
\cite{IvMe,CaWa}.
\end{remark}

From the well-known L\'evy martingale characterization
of the Brownian motion (see \cite{Du} or \cite{ReYo}) we get the following
corollary.

\begin{corollary}\label{cor2}
Let $X=\{X_{A}:A\in \mathbf{A}\}$ be a square-integrable set-indexed
martingale with $X_{\varnothing^{\prime}}=0$ that is inner- and
outer-continuous. Let $\sigma\in M(\mathbf{A})$. Then $X$ is a
set-indexed Brownian motion with variance $\sigma$ if and only if
$\langle X^{A}\rangle$ is deterministic for all strictly increasing and
continuous sequence $\{A_{\alpha}\}_{\alpha\in\lbrack0,\infty)}$ in
$\mathbf{A}(\mathbf{u})$.
\end{corollary}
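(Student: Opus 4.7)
The plan is to reduce this corollary to Theorem~\ref{thm1} by invoking the classical L\'evy martingale characterization of Brownian motion (a continuous square-integrable martingale with deterministic quadratic variation is, after the natural time-change, a standard Brownian motion). Theorem~\ref{thm1} already shows that being a set-indexed Brownian motion is equivalent to each projection $X^{A}$ along a strictly increasing continuous sequence being an $S_{[0,\infty)}$-time-changed Brownian motion; L\'evy's theorem then lets us replace the latter by the condition that $\langle X^{A}\rangle$ be deterministic.

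For the ``only if'' direction, suppose $X$ is a set-indexed Brownian motion with variance $\sigma$. By Theorem~\ref{thm1}, for each strictly increasing continuous sequence $A=\{A_{\alpha}\}_{\alpha\in[0,\infty)}$ in $\mathbf{A}(\mathbf{u})$ there exists $\pi\in S_{[0,\infty)}$ such that $X^{\pi,A}$ is a standard Brownian motion. Since $\langle X^{\pi,A}\rangle_{\alpha}=\alpha$, the usual time-change rule for predictable compensators gives $\langle X^{A}\rangle_{\alpha}=\pi^{-1}(\alpha)$, which is deterministic (and, as noted in the proof of Theorem~\ref{thm1}, coincides with $\sigma(A_{\alpha})$).

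For the ``if'' direction, assume $\langle X^{A}\rangle$ is deterministic for every strictly increasing and continuous sequence $A$ in $\mathbf{A}(\mathbf{u})$. The inner- and outer-continuity of $X$ together with Definition~\ref{def6} ensure that the projection $X^{A}$ is a continuous square-integrable one-parameter martingale. Write $\theta(\alpha)=\langle X^{A}\rangle_{\alpha}$; this is a deterministic continuous nondecreasing function, and strict monotonicity follows from the strict monotonicity of $A$ together with $\sigma\in M(\mathbf{A})$. Setting $\pi=\theta^{-1}\in S_{[0,\infty)}$, the process $X^{\pi,A}$ is a continuous martingale with $\langle X^{\pi,A}\rangle_{\alpha}=\alpha$, so L\'evy's characterization identifies it as a standard Brownian motion. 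Hence $X^{A}$ is an $S_{[0,\infty)}$-time-changed Brownian motion for every such sequence $A$, and Theorem~\ref{thm1} yields that $X$ is a set-indexed Brownian motion.

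The delicate step is the ``if'' direction: one must verify that the one-parameter process $X^{A}$ really is a continuous classical martingale (where the inner/outer continuity hypothesis is essential, since set-indexed martingality does not a priori give continuous paths along a sequence), and that the deterministic function $\theta=\langle X^{A}\rangle$ is strictly increasing so that $\pi=\theta^{-1}$ is a genuine element of $S_{[0,\infty)}$ and L\'evy's theorem applies without qualification. Matching the resulting variance with the prescribed $\sigma$ is then an additional bookkeeping step that rests on the identity $\langle X^{A}\rangle_{\alpha}=\sigma(A_{\alpha})$, which is forced by combining the time-change data with the strict monotonicity of $\sigma$ on $\mathbf{A}$.
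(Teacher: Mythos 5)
Your proof follows essentially the same route as the paper: both directions reduce to Theorem~\ref{thm1} via L\'evy's martingale characterization, using inner/outer continuity to get continuity of $X^{A}$ and the deterministic quadratic variation to build the time change $\pi$. You are in fact somewhat more explicit than the paper about constructing $\pi=\theta^{-1}$ and about the step matching $\langle X^{A}\rangle_{\alpha}$ with $\sigma(A_{\alpha})$, which the paper's own proof glosses over.
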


\begin{proof}({\it if})
Suppose that $X$ is a set-indexed Brownian motion with
variance~$\sigma$. By Theorem \ref{thm1} the process $X^{A}$ is an
$S_{[0,\infty)} $-time-changed Brownian motion for all strictly
increasing and continuous sequences
$\{A_{\alpha}\}_{\alpha\in\lbrack0,\infty)}$ in $\mathbf{A(u)}$. Then
from the L\'evy characterization we conclude that $\langle
X^{A}\rangle$ is deterministic for all strictly increasing and
continuous sequences $\{A_{\alpha}\}_{\alpha\in\lbrack0,\infty)}$ in
$\mathbf{A}(\mathbf{u})$.

({\it only if}) Suppose that $\langle X^{A}\rangle$ is deterministic
for all strictly increasing and continuous sequences
$\{A_{\alpha}\}_{\alpha\in\lbrack0,\infty)}$ in
$\mathbf{A}(\mathbf{u})$. Since $X$ is inner- and outer-continuous,
$X^{A}$ is continuous (see \cite{IvMe}). Since $X$ is a set-indexed
martingale, $X^{A}$is a~martingale. But if the process $X^{A}$ is a
martingale and $\langle X^{A}\rangle$ is deterministic, then based on
the L\'evy characterization we get that $X^{A}$ is an
$S_{[0,\infty)}$-time-changed Brownian motion for all strictly
increasing and continuous sequences $\{A_{\alpha}\}_{\alpha
\in\lbrack0,\infty)}$ in $\mathbf{A}(\mathbf{u})$. Thus, from Theorem
\ref{thm1} we conclude that $X$ is a set-indexed Brownian motion with
variance $\sigma$.
\end{proof}

In addition, Theorem \ref{thm1} is an important ``bridge'' from a
set-indexed Brownian motion to a Brownian motion, and from that we
extend many theorems, such as hitting time (Corollary \ref{cor3}),
reflection principle (Corollary \ref{cor4}), exiting from an interval
(Corollary \ref{cor5}), unboundedness (Corollary \ref{cor6}), strong
law of large numbers (Corollary \ref{cor7}), law of iterated logarithm
(Corollary \ref{cor8}), the zero set (Corollary~\ref{cor9}), and the
like.

Let $L$ be a decreasing continuous line in $\Re_{+}^{2}$. If
$A\in\mathbf{A}(\Re_{+}^{2})$, then we
\begin{enumerate}
\item[\rm(a)] write $A\prec L$ ($A\succ L$) if there exist
    $(x,y)\in L$ such that $A\subset\lbrack0,x]\times\lbrack 0,y]$
    ($A\supset\lbrack0,x]\times\lbrack0,y]$);

\item[\rm(b)] write $A\preceq L$ ($A\succeq L$) if there exist
    $(x,y)\in L$ such that $A\subseteq\lbrack0,x]\times\lbrack
    0,y]$ ($A\supseteq \lbrack0,x]\times\lbrack0,y]$).

\item[\rm(c)] write $A\in L$ if there exist $(x,y)\in L$ such that
    $A=[0,x]\times\lbrack0,y]$.
\end{enumerate}

Let $X=\{X_{A}:A\in\mathbf{A}(\Re_{+}^{2})\}$ be a set-indexed Brownian
motion. For $a>0$, we define $L_{a}$ to be a decreasing continuous line
in $\Re_{+}^{2}$ such that
\begin{enumerate}
\item[\rm(a)] if $A\prec L_{a}$, then $X_{A}<a$, and

\item[\rm(b)] if $A\in L_{a}$, then $X_{A}\geq a$ for the first time on $A$.
\end{enumerate}
(In other words, $L_{a}$ is the collection of points $(x,y)$ when $X$
reaches the value $a$ for the first time.)

\begin{corollary}[Hitting time]\label{cor3}
Let $X=\{X_{A}:A\in\mathbf{A}(\Re_{+}^{2})\}$ be a set-indexed Brownian
motion with variance $\sigma$ (Lebesgue measure). Then
\[
P[L_{a}\preceq A]=2-2\varPhi \biggl(\frac{a}{\sqrt{\sigma_{A}}}\biggr)\quad \mbox{for all}\
A\in\mathbf{A}\bigl(\Re_{+}^{2}\bigr)
\]
($\varPhi$ is the standard Gaussian distribution function).
\end{corollary}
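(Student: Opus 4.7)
The plan is to reduce the two-dimensional hitting-time statement to the classical one-dimensional reflection principle, using Theorem \ref{thm1} as the bridge.

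First, I would embed $A$ into a strictly increasing continuous sequence. By Remark \ref{rem1} together with Lemma \ref{lem2}, there is a strictly increasing and continuous sequence $\{A_{\alpha}\}_{\alpha\in[0,\infty)}$ in $\mathbf{A}(\mathbf{u})$ with $A_{\alpha_{0}}=A$ for some $\alpha_{0}>0$. Theorem \ref{thm1} then supplies a $\pi\in S_{[0,\infty)}$ such that the one-parameter process $X^{\pi,A}=\{X_{A_{\pi(\alpha)}}\}_{\alpha\geq 0}$ is a standard Brownian motion. As in the proof of Theorem \ref{thm1}, the time change is $\pi=\theta^{-1}$ with $\theta(\alpha)=\sigma(A_{\alpha})$, so the Brownian motion has accumulated variance exactly $\sigma_{A}$ at the parameter value corresponding to the set $A$.

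Second, I would translate the two-dimensional event $\{L_{a}\preceq A\}$ into a one-dimensional hitting event. By the defining property of $L_{a}$ (rectangles strictly below $L_{a}$ satisfy $X<a$, while rectangles on $L_{a}$ attain $X\geq a$ for the first time), the event $\{L_{a}\preceq A\}$ coincides with $\{\sup_{B\subseteq A,\,B\in\mathbf{A}}X_{B}\geq a\}$. The strictly increasing continuous sequence furnished by Lemma \ref{lem2} exhausts the rectangles inside $A$ monotonically, so by the inner/outer continuity of $X$ this supremum equals $\max_{0\leq\alpha\leq\sigma_{A}}X_{A_{\pi(\alpha)}}$, reducing the event to one concerning the one-parameter Brownian motion $X^{\pi,A}$. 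The classical reflection principle then yields
\[
P\Bigl[\max_{0\leq\alpha\leq\sigma_{A}}X_{A_{\pi(\alpha)}}\geq a\Bigr]=2P\bigl[X_{A_{\pi(\sigma_{A})}}\geq a\bigr]=2-2\varPhi\bigl(a/\sqrt{\sigma_{A}}\bigr),
\]
which is the desired formula.

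The main obstacle lies in the second step: justifying rigorously that the two-dimensional geometric event $\{L_{a}\preceq A\}$ coincides with the one-dimensional first-passage event along the specific monotone sequence chosen. This requires leveraging the fact that $L_{a}$ is itself defined as a \emph{decreasing continuous} boundary, so the ``first time $X$ reaches $a$'' behavior it encodes is genuinely captured by any strictly increasing continuous flow through $A$, modulo continuity of sample paths. Everything else is classical: once we are on a single increasing path, Theorem \ref{thm1} turns the set-indexed process into a Brownian motion and the reflection principle gives the result.
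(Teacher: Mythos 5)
Your reduction takes a genuinely different route from the paper's, and it contains a gap at exactly the step you flag as ``the main obstacle.'' You correctly identify $\{L_{a}\preceq A\}$ with the event $\{\sup_{B\subseteq A,\,B\in\mathbf{A}}X_{B}\geq a\}$, but the next claim --- that this supremum over \emph{all} rectangles contained in $A$ equals $\max_{0\leq\alpha\leq\alpha_{0}}X_{A_{\pi(\alpha)}}$ along the one chain furnished by Lemma \ref{lem2} --- is false. The family $\{B\in\mathbf{A}:B\subseteq A\}$ is a two-parameter family that is not totally ordered by inclusion (e.g.\ $[0,1]\times[0,\tfrac12]$ and $[0,\tfrac12]\times[0,1]$ are incomparable), so no single increasing sequence can contain all of them; continuity of $X$ and of $L_{a}$ only controls the values along the chosen chain, not at the sub-rectangles the chain misses. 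The process can exceed $a$ at a rectangle off the chain while staying below $a$ everywhere on it, so you only get $P[\max_{\alpha}X_{A_{\pi(\alpha)}}\geq a]\leq P[L_{a}\preceq A]$, and the classical running-maximum reflection formula gives a lower bound, not the stated identity.

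The paper argues differently and uses the increasing path for a much more limited purpose. It writes
\[
P[X_{A}\geq a]=P[X_{A}\geq a\mid A\prec L_{a}]\,P[A\prec L_{a}]+P[X_{A}\geq a\mid A\succeq L_{a}]\,P[A\succeq L_{a}],
\]
kills the first term by the defining property of $L_{a}$, and then invokes Theorem \ref{thm1} on a single chain through $A$ only to justify, by symmetry of the resulting one-parameter Brownian motion after the hitting event, that $P[X_{A}\geq a\mid A\succeq L_{a}]=\tfrac12$; this yields $P[L_{a}\preceq A]=2P[X_{A}\geq a]$ directly, without ever representing the event $\{L_{a}\preceq A\}$ as a one-parameter maximum. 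If you want to salvage your argument you would need to replace the single-path maximum by a genuinely two-parameter argument (a strong Markov property at the stopping line $L_{a}$), which is what the paper's conditional-symmetry step is implicitly doing.
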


\begin{proof}
Let $A\in\mathbf{A}(\Re_{+}^{2})$. Then $P[X_{A}\geq a]=P[X_{A}\geq
a|A\prec L_{a}]P[A\prec L_{a}]+P[X_{A}\geq a|A\succeq L_{a}]P[A\succeq
L_{a}]$. From the definition of $L_{a}$ we conclude that if $A\prec
L_{a}$, then $X_{A}<a$, and thus $P[X_{A}\geq a|A\prec L_{a}]=0$. It is
clear that there exist a strictly increasing and continuous sequence
$\{B_{\alpha }\}_{\alpha\in \lbrack0,\infty)}$ in $\mathbf{A(u)}$ and
$\alpha_{0}\geq0$ such that $B_{\alpha_{0}}=$ $A$. The sequence
$B=\{B_{\alpha}\}_{\alpha\in \lbrack0,\infty)}$ is strictly continuous;
therefore, from Theorem \ref{thm1} we conclude that $X^{B}$ is a
$G$-time-changed Brownian motion. (In other words, there exists $\pi
\in S_{[0,\infty)}$ such that $X^{\pi,B}$ is a Brownian motion.) By the
symmetry of $X^{\pi,B}$ it is clear that $P[X_{A}\geq a|A\succeq
L_{a}]=P[X_{\alpha _{0}}^{\pi,B}\geq a|A\succeq
L_{a}]=P[X_{B_{\pi(\alpha_{0})}}\geq a|A\succeq L_{a}]=\frac{1}{2}$,
and thus $P[L_{a}\preceq A]=2P[X_{A}\geq
a]=2-2\varPhi(\frac{a}{\sqrt{\sigma_{A}}})$.\vadjust{\goodbreak}
\end{proof}

\begin{corollary}[Reflection principle]\label{cor4}
Let $X=\{X_{A}:A\in\mathbf{A}(\Re_{+}^{2})\}$ be a set-indexed Brownian
motion with variance $\sigma$ (Lebesgue measure). Then, for
$A\in\mathbf{A}$,
\[
W_{A}=\left\{
\begin{array}{cc}
X_{A}, & A\prec L_{a} \\
2a-X_{A}, & A\succeq L_{a}
\end{array}
\right\}
\]
is a set-indexed Brownian motion with variance $\sigma$.
\end{corollary}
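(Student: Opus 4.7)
The plan is to invoke Theorem \ref{thm1} to reduce the claim to the classical one-dimensional reflection principle. Concretely, I will show that for every strictly increasing and continuous sequence $\{A_\alpha\}_{\alpha \in [0,\infty)}$ in $\mathbf{A}(\mathbf{u})$, the projection $W^A = \{W_{A_\alpha} : \alpha \in [0,\infty)\}$ is an $S_{[0,\infty)}$-time-changed Brownian motion; since $W$ is plainly square-integrable (because $X$ is and $|W_A| \leq |X_A| + 2a$), Theorem \ref{thm1} will then yield that $W$ is a set-indexed Brownian motion with variance $\sigma$.

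To carry this out, first fix such a sequence. Since $X$ is itself a set-indexed Brownian motion, Theorem \ref{thm1} (in the ``if'' direction, as constructed in its proof) supplies $\pi \in S_{[0,\infty)}$, namely $\pi = \theta^{-1}$ with $\theta(\alpha) = \sigma(A_\alpha)$, such that $X^{\pi,A}_\alpha = X_{A_{\pi(\alpha)}}$ is a standard one-parameter Brownian motion. Next I would translate the geometric events defining $W$ into events involving $X^{\pi,A}$: set $\beta_0 = \inf\{\beta : A_\beta \succeq L_a\}$ and $T_a = \theta(\beta_0) = \sigma(A_{\beta_0})$. Using the two defining properties of $L_a$ together with the continuity of $\alpha \mapsto X_{A_\alpha}$ along the sequence, one identifies $T_a$ with the first-passage time $\inf\{\alpha : X^{\pi,A}_\alpha \geq a\}$. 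After this identification,
\[
W^{\pi,A}_\alpha = \begin{cases} X^{\pi,A}_\alpha, & \alpha < T_a, \\ 2a - X^{\pi,A}_\alpha, & \alpha \geq T_a, \end{cases}
\]
is exactly the pathwise reflection of the one-parameter Brownian motion $X^{\pi,A}$ at its hitting time of level $a$. The classical reflection principle then says that $W^{\pi,A}$ is a Brownian motion, so $W^A$ is an $S_{[0,\infty)}$-time-changed Brownian motion. A second invocation of Theorem \ref{thm1} (now in the ``only if'' direction) closes the argument.

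The step I expect to be the main obstacle is the geometric-probabilistic identification above: verifying that the first time the sequence crosses $L_a$ coincides with the first passage of the projected process to $a$. The inclusion $\{\alpha : A_{\pi(\alpha)} \prec L_a\} \subseteq \{\alpha : X^{\pi,A}_\alpha < a\}$ is immediate from clause (a) of the definition of $L_a$. The reverse direction uses clause (b), namely that on sets $A \in L_a$ the process $X_A$ attains $a$ for the first time, combined with path-continuity of $\alpha \mapsto X_{A_\alpha}$ to rule out a jump past $a$ before the sequence meets $L_a$. Once this is in place, the remainder is routine bookkeeping plus the one-parameter reflection principle.
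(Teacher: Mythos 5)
Your proposal is correct and follows essentially the same route as the paper: both arguments reduce to the classical one-parameter reflection principle by projecting onto a strictly increasing continuous sequence via Theorem \ref{thm1} and identifying the first crossing of $L_{a}$ with the hitting time of level $a$ for the time-changed Brownian motion $X^{\pi,A}$. The only cosmetic difference is that the paper verifies the definition of a set-indexed Brownian motion directly on an arbitrary finite disjoint family $\{C_{1},\dots,C_{k}\}$ embedded in a single such sequence, whereas you verify the hypothesis of Theorem \ref{thm1} on all sequences and then invoke its converse direction.
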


\begin{proof}
We must show that if $\{C_{1},\dots,C_{k}\}$
are disjoint, then $W_{C_{1}},\dots,W_{C_{k}}$ are independent normal
random variables with variances $\sigma_{C_{1}},\dots,\sigma
_{C_{k}}$, respectively. Similarly to the construction done in Theorem
\ref{thm1}, we can get that there exists a strictly increasing and continuous
sequence $\{A_{\alpha}\}_{\alpha\in\lbrack0,\infty)}$ in
$\mathbf{A(u)}$ such that $C_{i}=A_{i}\backslash A_{i-1}$. The sequence
$\{A_{\alpha}\}_{\alpha\in\lbrack0,\infty)}$ is strictly
continuous; therefore, from Theorem \ref{thm1} we conclude that there exists
$\pi\in S_{[0,\infty)}$ such that $X^{\pi,A}$ is a Brownian motion.
Clearly, there exists $\alpha_{a}\geq0$ such that $A_{\pi(\alpha
_{a})}\in L_{a}$. We recall
that if $X=\{X_{t}:t\geq0\}$ is be a classical Brownian motion and
$T_{a}=\inf\{t\geq0: X_{t}=a\}$, then
\[
Z_{t}=\left\{
\begin{array}{cc}
X_{t}, & t<T_{a} \\
2a-X_{t}, & t\geq T_{a}
\end{array}
\right\}\quad (t\geq0)
\]
is a Brownian motion \cite{Fr}. Thus, if we define
\[
W_{\alpha}=\left\{
\begin{array}{cc}
X_{\alpha}^{\pi,A}, & \alpha<\alpha_{a} \\
2a-X_{\alpha}^{\pi,A}, & \alpha\geq\alpha_{a}%
\end{array}
\right\},
\]
then $W_{\alpha}$ turns out to be a Brownian motion, and so
$W_{C_{1}},\dots,W_{C_{k}}$ are independent normal random variables
with variances $\sigma_{C_{1}},\dots,\sigma_{C_{k}}$, respectively.
\end{proof}

Let $X=\{X_{A}:A\in\mathbf{A}(\Re_{+}^{2})\}$ be a
set-indexed Brownian motion. For $a<0<b$, define $D_{(a,b)}=\{A\in
\mathbf{A}: X_{A}\notin(a,b)\ \mbox{for the first time}\}$. (In other
words, $D_{(a,b)}$ is the
collection of sets $A\in\mathbf{A}$ such that if $A\in D_{(a,b)}$,
then $X_{A}\notin(a,b)$ for the first time on $A$).

\begin{corollary}[Reflection principle]\label{cor5}
Let $X=\{X_{A}:A\in\mathbf{A}(\Re_{+}^{2})\}$ be a set-indexed Brownian
motion with variance $\sigma$ (Lebesgue measure). If
$T(a,b)={\xch{\bigcap}{\cap}}_{A\in D(a,b)}A$, then
$P[X_{T(a,b)}=b]=\frac{|a|}{b+|a|}$.
\end{corollary}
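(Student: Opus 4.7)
The plan is to follow exactly the reduction strategy used in Corollaries \ref{cor3} and \ref{cor4}: use Theorem \ref{thm1} to pass from the set-indexed Brownian motion $X$ to a classical one-parameter Brownian motion along an appropriate strictly increasing continuous sequence, and then invoke the standard one-dimensional ``gambler's ruin'' identity for the exit time from an interval.

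First, using Remark \ref{rem1} (together with the construction in Lemma \ref{lem2}), I would produce a strictly increasing and continuous sequence $\{A_{\alpha}\}_{\alpha\in[0,\infty)}$ in $\mathbf{A(u)}$ that contains the stopping set $T(a,b)$ as a term, i.e.\ $A_{\alpha_{0}} = T(a,b)$ for some $\alpha_{0} \geq 0$. Membership $T(a,b) \in \mathbf{A(u)}$ itself is guaranteed by the closure of $\mathbf{A}$ under intersections in Definition \ref{def0}(2). By Theorem \ref{thm1} there exists $\pi \in S_{[0,\infty)}$ such that $X^{\pi,A} = \{X_{A_{\pi(\alpha)}} : \alpha \geq 0\}$ is a classical Brownian motion on $\Re_{+}$. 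Setting $\beta_{0} = \pi^{-1}(\alpha_{0})$, we have $X^{\pi,A}_{\beta_{0}} = X_{A_{\pi(\beta_{0})}} = X_{T(a,b)}$.

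The second step is to identify $\beta_{0}$ with the classical first-exit time of $X^{\pi,A}$ from $(a,b)$. By definition, $T(a,b)$ is the minimal set (in the intersection sense) at which $X$ first leaves $(a,b)$, and because $\{A_{\pi(\alpha)}\}_{\alpha\geq 0}$ is a strictly increasing continuous chain, the event $\{X^{\pi,A}_{\alpha} \notin (a,b)\}$ first occurs precisely at $\alpha = \beta_{0}$. Applying the optional stopping theorem to the bounded martingale $X^{\pi,A}$ stopped at $\beta_{0}$ (or equivalently, the classical gambler's ruin formula for one-dimensional Brownian motion) yields
\[
P\bigl[X^{\pi,A}_{\beta_{0}} = b\bigr] = \frac{|a|}{b+|a|},
\]
which immediately gives $P[X_{T(a,b)} = b] = \frac{|a|}{b+|a|}$.

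The main obstacle I foresee is the first step: making rigorous the claim that $T(a,b)$ can be inserted into a strictly increasing and continuous sequence in $\mathbf{A(u)}$, and that under the reparameterization $\pi$ the one-parameter first-exit time of the classical Brownian motion truly matches $\beta_{0}$ without skipping an exit that might occur ``between'' set-indexed nodes. Both points rely on the continuity of $X^{\pi,A}$ and on the richness of $\mathbf{A(u)}$ guaranteed by Definition \ref{def0} and Lemma \ref{lem2}; the argument parallels the one used to identify the hitting line $L_{a}$ in the proof of Corollary \ref{cor3}.
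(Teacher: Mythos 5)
Your proposal follows essentially the same route as the paper's own proof: embed $T(a,b)$ into a strictly increasing continuous sequence in $\mathbf{A(u)}$, apply Theorem \ref{thm1} to obtain a reparametrization $\pi$ making $X^{\pi,A}$ a classical Brownian motion, identify the reparametrized index of $T(a,b)$ with the one-dimensional first-exit time, and invoke the gambler's ruin formula. The only difference is that you flag explicitly the identification issues that the paper dismisses with ``it is clear'' and ``it is easy to see.''
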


\begin{proof}
It is clear that $T(a,b)\in\mathbf{A}(\Re_{+}^{2})$. It is easy to see
that there exists a strictly increasing and continuous sequence
$\{A_{\alpha}\}_{\alpha\in\lbrack0,\infty)}$ in
$\mathbf{A}(\mathbf{u})$ and there exists $0\leq\beta$ such that
$A_{\beta}=$ $T(a,b)$. The sequence $\{A_{\alpha}\}_{\alpha\in
\lbrack0,\infty)}$ is strictly increasing and continuous; therefore,
from Theorem \ref{thm1} we conclude that $X^{A}$ is an $S_{[0,\infty
)}$-time-changed Brownian motion. (In other words, there exists a $\pi
\in S_{[0,\infty)}$ such that $X^{\pi,A}=\{X_{A_{\pi(\alpha
)}}:\alpha\in\lbrack0,\infty)\}$ is a Brownian motion, and there exists
$0\leq\alpha_{(a,b)}$ such that $A_{\beta}=A_{\pi(\alpha
_{(a,b)})}=T(a,b) $). We recall that if $X=\{X_{t}:t\geq0\}$ is a
Brownian motion and $t(a,b):=\inf_{t\geq0}\{X_{t}\notin (a,b)\}$ for
$a<0<b$, then $P[X_{t(a,b)}=b]=\frac{|a|}{b+|a|}$. Thus,
$P[X_{T(a,b)}=b]=P[X_{\alpha_{(a,b)}}^{\pi,A}=b]=\frac{|a|}{b+|a|}$.
\end{proof}

\begin{corollary}[Unboundedness]\label{cor6}
Let $\sigma\in M(\mathbf{A})$, and let $X=\{X_{A}:A\in\mathbf{A}\}$ be
a set-indexed Brownian motion with variance $\sigma$. Then ${\sup
}_{A\in\mathbf{A}}X_{A}(\omega )=+\infty$ and
$\inf_{A\in\mathbf{A}}X_{A}(\omega )=-\infty$ for almost all $\omega$.
\end{corollary}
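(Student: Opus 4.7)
The plan is to reduce the claim to the classical fact that a real Brownian motion is almost surely unbounded above and below, using Theorem \ref{thm1} as a transfer principle: project $X$ onto a strictly increasing and continuous sequence of index sets, invoke Theorem \ref{thm1} to turn this projection into a time-changed Brownian motion, and then apply one-dimensional unboundedness.

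For the setup, by Definition \ref{def0}(1) I pick an increasing sequence $(B_n)_{n=1}^\infty$ in $\mathbf{A}$ with $T=\bigcup_n B_n^\circ$, and by Remark \ref{rem1} embed it into a strictly increasing and continuous sequence $\{A_\alpha\}_{\alpha\in[0,\infty)}$ in $\mathbf{A(u)}$ such that $A_n=B_n$ for every $n\in\mathbb{N}$. Theorem \ref{thm1} then yields $\pi\in S_{[0,\infty)}$ for which $X^{\pi,A}=\{X_{A_{\pi(\alpha)}}\}_{\alpha\geq 0}$ is a standard one-dimensional Brownian motion $W$. The classical facts $\limsup_{t\to\infty} W_t=+\infty$ and $\liminf_{t\to\infty} W_t=-\infty$ almost surely (via, e.g., the law of the iterated logarithm or the recurrence of 1D Brownian motion) give $\sup_{\alpha\geq 0}X_{A_{\pi(\alpha)}}=+\infty$ and $\inf_{\alpha\geq 0}X_{A_{\pi(\alpha)}}=-\infty$ almost surely; since $\pi$ is a bijection of $[0,\infty)$, the same extrema hold for $\{X_{A_\alpha}\}_{\alpha\geq 0}$.

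The main obstacle is that a priori these extrema are realized on sets $A_\alpha\in\mathbf{A(u)}$ rather than on sets in $\mathbf{A}$. I would circumvent this by restricting to integer indices: since $A_n=B_n\in\mathbf{A}$, the sequence $\{X_{B_n}\}_{n\geq 1}$ is itself a Gaussian random walk whose increments $X_{B_n\setminus B_{n-1}}$ are, by Definitions \ref{def2} and \ref{def7}, independent centered Gaussians with variances $\sigma(B_n)-\sigma(B_{n-1})$. The total variance telescopes to $\lim_n\sigma(B_n)$, the $\sigma$-mass of $T$; when this is infinite, the standard random-walk dichotomy yields $\sup_n X_{B_n}=+\infty$ and $\inf_n X_{B_n}=-\infty$ almost surely, and since each $B_n\in\mathbf{A}$, we conclude $\sup_{A\in\mathbf{A}}X_A\geq\sup_n X_{B_n}=+\infty$ and $\inf_{A\in\mathbf{A}}X_A\leq\inf_n X_{B_n}=-\infty$, as required.
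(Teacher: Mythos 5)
Your proposal is correct and follows the same overall strategy as the paper -- reduce to a one-parameter process along a strictly increasing continuous sequence via Remark \ref{rem1} and Theorem \ref{thm1}, then invoke the classical unboundedness of one-dimensional Brownian motion -- but it diverges in the one step that actually matters, and for the better. The paper simply asserts that $\sup_{\alpha\geq 0}X_{A_{\alpha}}(\omega)\leq\sup_{A\in\mathbf{A}}X_{A}(\omega)$ ``clearly,'' which is not clear at all: the sets $A_{\alpha}$ live in $\mathbf{A(u)}$, and for a finite union $A_{\alpha}=A^{1}\cup A^{2}$ one has $X_{A_{\alpha}}=X_{A^{1}}+X_{A^{2}}-X_{A^{1}\cap A^{2}}$, which can exceed both $X_{A^{1}}$ and $X_{A^{2}}$, so a large value of $X$ on the sequence does not automatically witness a large value on $\mathbf{A}$. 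You identify exactly this obstacle and resolve it by restricting to the integer indices, where $A_{n}=B_{n}\in\mathbf{A}$, and treating $\{X_{B_{n}}\}$ as a Gaussian random walk with independent centered increments of variances $\sigma(B_{n})-\sigma(B_{n-1})$; the random-walk dichotomy then gives $\sup_{n}X_{B_{n}}=+\infty$ and $\inf_{n}X_{B_{n}}=-\infty$ a.s., and these suprema genuinely lower-bound $\sup_{A\in\mathbf{A}}X_{A}$ since each $B_{n}\in\mathbf{A}$. So your argument closes a gap the paper leaves open. The only caveat, which you correctly flag with ``when this is infinite,'' is that the conclusion requires $\lim_{n}\sigma(B_{n})=+\infty$ (equivalently, $\sigma$ unbounded on $\mathbf{A}$); if $\sigma(T)<\infty$ the statement is simply false, and the paper's proof needs this hypothesis just as much (it is hidden in the requirement that $\theta(\alpha)=\sigma(A_{\alpha})$ be a bijection of $[0,\infty)$), so making it explicit is a feature of your write-up, not a defect.
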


\begin{proof}
Clearly, we have
${\sup}_{\alpha\geq0}X_{A_{\alpha}}(\omega)\leq\sup_{A\in\mathbf{A}}X_{A}(\omega)$
(${\inf}_{\alpha\geq0}X_{A_{\alpha}}(\omega)\geq\break{\inf}_{A\in
\mathbf{A}}X_{A}(\omega))$ for all strictly increasing and continuous
sequences $\{A_{\alpha}\}_{\alpha\in\lbrack0,\infty)}$ in
$\mathbf{A(u)}$. By Theorem \ref{thm1} the process $X^{A}$ is an
$S_{[0,\infty)}$-time-changed Brownian motion for all strictly
increasing and continuous sequences $\{A_{\alpha
}\}_{\alpha\in\lbrack0,\infty)}$ in $\mathbf{A(u)}$; therefore,
$+\infty ={\sup}_{\alpha\geq0}X_{A_{\pi(\alpha)}}(\omega)\leq
{\sup}_{A\in\mathbf{A}}X_{A}(\omega)$ for almost all $\omega$ ($-\infty
\inf_{\alpha\geq0}X_{A_{\pi(\alpha)}}(\omega)\geq
\inf_{A\in\mathbf{A}}X_{A}(\omega)$ for almost all $\omega$).
\end{proof}

Let $B\in\mathbf{A(u)}$.
\begin{enumerate}
\item Let $A=\{A_{n}\}$ be an increasing sequence in $\mathbf{A}$.
    We write $A_{n}\uparrow B$ (or, in short notation, $A\uparrow
    B$) if $A_{n}\neq B$ for all $n$ and
    $\overline{{\xch{\bigcup}{\cup}}_{n}A_{n}}=B$.

\item Let $A=\{A_{n}\}$ be a decreasing sequence in $\mathbf{A}$.
    We write $A_{n}\downarrow B$ (or, in short notation,
    $A\downarrow T$) if $A_{n}\neq B$ for all $n$ and
    ${\xch{\bigcap}{\cap}}_{n}A_{n}=B$.
\end{enumerate}

\begin{corollary}[Strong law of large numbers and unboundedness]\label{cor7}
Let $\sigma\in M(\mathbf{A})$, and let
$X=\{X_{A}:A\in\mathbf{A}\}$ be a
set-indexed Brownian motion with variance $\sigma$. Then
\begin{enumerate}
\item[\rm(a)] $\varliminf_{A\uparrow T}\frac {X_{A}}{\sigma_{A}}=0$
    for almost all $\omega$, and for all sequences $A_{n}\uparrow
    T$, $\{A_{n}\}\in\mathbf{A}$.

\item[\rm(b)] $P[{\varliminf}_{A\uparrow T}X_{A}=-\infty
    ]=P[\varlimsup_{A\uparrow T}X_{A}=\infty]=1$.
\end{enumerate}
\end{corollary}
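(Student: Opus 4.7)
The plan is to reduce both statements to the corresponding classical results for one-parameter Brownian motion by applying Theorem \ref{thm1}, in the same ``divide and conquer'' spirit used throughout Corollaries \ref{cor3}--\ref{cor6}. Fix an arbitrary sequence $\{A_n\}\subset\mathbf{A}$ with $A_n\uparrow T$. By Remark \ref{rem1}, one can embed $\{A_n\}$ into a strictly increasing and continuous sequence $\{A_\alpha\}_{\alpha\in[0,\infty)}$ in $\mathbf{A(u)}$ whose integer values agree with the original $A_n$. Applying Theorem \ref{thm1} to this extended sequence, there exists $\pi\in S_{[0,\infty)}$ such that $W_\alpha:=X_{A_{\pi(\alpha)}}$ is a standard one-parameter Brownian motion. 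Tracing through the \emph{if} direction of the proof of Theorem \ref{thm1}, the time change is $\pi=\theta^{-1}$ with $\theta(\beta)=\sigma(A_\beta)$, so $X_{A_\beta}=W_{\sigma(A_\beta)}$ for every $\beta\geq 0$; in particular, $X_{A_n}=W_{\sigma(A_n)}$.

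For part (a), since $\sigma$ is strictly monotone on $\mathbf{A}$ and $A_n\uparrow T$, the sequence $\sigma(A_n)$ is strictly increasing; under the standing assumption $\sigma(T)=+\infty$ (which holds in the Lebesgue-measure examples motivating the setup), it tends to $+\infty$. The classical strong law of large numbers for Brownian motion asserts that $W_t/t\to 0$ almost surely as $t\to\infty$, hence
\[
\frac{X_{A_n}}{\sigma(A_n)}=\frac{W_{\sigma(A_n)}}{\sigma(A_n)}\longrightarrow 0
\]
almost surely as $n\to\infty$, which immediately yields $\varliminf_{n\to\infty}X_{A_n}/\sigma(A_n)=0$ almost surely.

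For part (b), the identity $X_{A_n}=W_{\sigma(A_n)}$ together with $\sigma(A_n)\to\infty$ transports the classical almost-sure unboundedness of Brownian motion (which follows, for example, from the law of the iterated logarithm: $\varlimsup_{t\to\infty}W_t=+\infty$ and $\varliminf_{t\to\infty}W_t=-\infty$ almost surely) directly to $\varlimsup_n X_{A_n}=+\infty$ and $\varliminf_n X_{A_n}=-\infty$ almost surely.

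The principal subtle point is that Theorem \ref{thm1} is applied sequence by sequence, so the natural reading of the statement is ``for each sequence $A_n\uparrow T$, the conclusion holds almost surely''; the argument delivers exactly this, and would only break if one insisted on a single full-measure event working for every sequence simultaneously. The only other item requiring care is the unboundedness of $\sigma(A_n)$, which hinges on the implicit assumption that the variance measure is infinite on $T$; this is what makes the passage $t\to\infty$ in the classical Brownian results applicable.
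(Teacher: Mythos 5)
Your proof is correct and follows essentially the same route as the paper: embed the given sequence $A_n\uparrow T$ into a strictly increasing continuous sequence via Remark \ref{rem1}, apply Theorem \ref{thm1} to obtain the time-changed Brownian motion with $X_{A_n}=W_{\sigma(A_n)}$, and invoke the classical strong law of large numbers and unboundedness for one-parameter Brownian motion. Your explicit identification of the time change $\pi=\theta^{-1}$ and your remarks on the sequence-by-sequence null set and on $\sigma(A_n)\to\infty$ only make explicit what the paper leaves implicit.
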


\begin{proof}
Let $A_{n}\uparrow T$. By Theorem \ref{thm1} and Remark
\ref{rem1} there exists a strictly increasing and continuous sequence
$\{B_{\alpha}\}_{\alpha\in\lbrack0,\infty)}\in\mathbf{A(u)}$ such
that $X^{B}$ is an $S_{[0,\infty)}$-time-changed Brownian motion when
$A_{n}=B_{n}$ . (In other words, there exist a strictly increasing and
continuous sequence $\{B_{\alpha}\}_{\alpha\in\lbrack0,\infty)}\in
\mathbf{A(u)}$ and $\{\alpha_{n}\}\in\lbrack0,\infty)$ such that
$X^{B,\pi}$ is a Brownian motion when $A_{n}=B_{n}=B_{\pi(\alpha
_{n})}$ and $\pi\in S_{[0,\infty)}$). Then
\begin{enumerate}
\item[\rm(a)] $\varliminf_{A\uparrow T}\frac {X_{A}}{\sigma_{A}}=
    \varliminf_{n\rightarrow\infty}\frac
{X_{_{n}}^{B,\pi}}{\sigma(B_{\pi(n)})}=\varliminf_{\alpha_{n}\rightarrow\infty
}\frac{X_{\alpha_{n}}^{B}}{\alpha_{n}}\overset{\ast}{=}0$ for
almost all $\omega$ (for the equality~\(\overset{\ast}{=}\), see
\cite{BoSa}).
\item[\rm(b)] $P[\varliminf_{A\uparrow
    T}X_{A}=-\infty]=P[\varliminf_{\alpha_{n}\rightarrow\infty}X_{\alpha
    _{n}}^{B}=-\infty]\overset{\ast\ast}{=}1$ and
    $P[\varlimsup_{A\uparrow
    T}X_{A}=\infty]=P[\varlimsup_{\alpha_{n}\rightarrow
\infty}X_{\alpha_{n}}^{B}=\infty]\overset{\ast\ast }{=}1$ (for the
equality \(\overset{\ast\ast}{=}\), see \cite{Fr}).\qedhere
\end{enumerate}
\end{proof}

\begin{corollary}[Law of iterated logarithm]\label{cor8}
Let $\sigma\in M(\mathbf{A})$, and let
$X=\{X_{A}:A\in\mathbf{A}\}$
be a set-indexed Brownian
motion with variance $\sigma$. Then
\[
\varliminf_{A\uparrow T}\frac{X_{A}}{\sqrt{2\sigma_{A}\ln\ln(\sigma_{A})}}=-1
\]
and
\[
\varlimsup_{A\uparrow T}\frac{X_{A}}{\sqrt{2\sigma_{A}\ln\ln(\sigma_{A})}}=1
\]
for almost all $\omega$ and for all $A\uparrow T$.
\end{corollary}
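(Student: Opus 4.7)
The plan is to reuse the reduction pattern already employed in Corollaries \ref{cor6} and \ref{cor7}: fix an increasing sequence approaching $T$, embed it in a strictly increasing continuous sequence via Remark \ref{rem1}, pass to a one-parameter Brownian motion via Theorem \ref{thm1}, and then invoke the classical law of the iterated logarithm.

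Concretely, I would first fix an arbitrary increasing sequence $A_n \uparrow T$ in $\mathbf{A}$. By Remark \ref{rem1}, embed $\{A_n\}$ in a strictly increasing and continuous sequence $\{B_{\alpha}\}_{\alpha \in [0,\infty)}$ in $\mathbf{A(u)}$ with $B_n = A_n$. Theorem \ref{thm1} then supplies $\pi \in S_{[0,\infty)}$ such that $X^{\pi,B} = \{X_{B_{\pi(\alpha)}} : \alpha \geq 0\}$ is a (classical) Brownian motion; the construction in the ``if'' direction of Theorem \ref{thm1} shows that $\pi$ is the inverse of $\alpha \mapsto \sigma(B_{\alpha})$, so $\sigma(B_{\pi(\alpha)}) = \alpha$ for every $\alpha \geq 0$. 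Setting $\alpha_n := \sigma(A_n)$, we get $A_n = B_{\pi(\alpha_n)}$ and the key identity
\[
\frac{X_{A_n}}{\sqrt{2\sigma_{A_n}\ln\ln\sigma_{A_n}}} = \frac{X^{\pi,B}_{\alpha_n}}{\sqrt{2\alpha_n\ln\ln\alpha_n}}.
\]
Since $\sigma \in M(\mathbf{A})$ is strictly monotone and $A_n \uparrow T$, $\alpha_n \to \infty$.

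The classical law of the iterated logarithm applied to the Brownian motion $X^{\pi,B}$ then gives
\[
\varlimsup_{\alpha \to \infty}\frac{X^{\pi,B}_{\alpha}}{\sqrt{2\alpha\ln\ln\alpha}} = 1, \qquad \varliminf_{\alpha \to \infty}\frac{X^{\pi,B}_{\alpha}}{\sqrt{2\alpha\ln\ln\alpha}} = -1
\]
almost surely. Translating back via the identification above yields, for the set-indexed process, the required values $1$ and $-1$ for $\varlimsup_{A \uparrow T}$ and $\varliminf_{A \uparrow T}$, respectively, just as the classical results of \cite{BoSa} and \cite{Fr} were pulled back in the proof of Corollary \ref{cor7}.

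The principal obstacle is the quantifier ``for all $A \uparrow T$'': along an arbitrary discrete sub-sequence $\alpha_n$ the classical LIL only yields the bounds $\varlimsup_n \leq 1$ and $\varliminf_n \geq -1$, so the matching equalities must hold uniformly over all approaching sequences. This is handled by interpreting $\varlimsup_{A \uparrow T}$ and $\varliminf_{A \uparrow T}$ as the upper/lower limits taken over the full family of increasing sequences in $\mathbf{A}$, and by noting that every sequence of times $\alpha'_k \to \infty$ realizing the extremal values for $X^{\pi,B}$ pulls back via $B_{\pi(\cdot)}$ to an increasing sequence in $\mathbf{A(u)}$ approaching $T$ that realizes $1$ (respectively $-1$) for the set-indexed ratio, while the reverse inequality follows from the classical upper bound applied to any sequence. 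The exceptional null sets attached to the classical LIL and to these realizing sequences can be absorbed into a single almost-sure event on which the statement holds for all $A \uparrow T$ simultaneously.
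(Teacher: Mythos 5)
Your reduction is exactly the one the paper uses: embed $\{A_n\}$ into a strictly increasing continuous sequence via Remark \ref{rem1}, apply Theorem \ref{thm1} to obtain $\pi$ with $\sigma(B_{\pi(\alpha)})=\alpha$, identify $X_{A_n}/\sqrt{2\sigma_{A_n}\ln\ln\sigma_{A_n}}$ with $X^{\pi,B}_{\alpha_n}/\sqrt{2\alpha_n\ln\ln\alpha_n}$, and quote the classical law of the iterated logarithm from \cite{BoSa}, just as in Corollary \ref{cor7}. The only divergence is your closing paragraph, and the concern you raise there is genuine: along an arbitrary discrete sequence $\alpha_n\to\infty$ the classical LIL yields only $\varlimsup\le 1$ and $\varliminf\ge -1$ (a sufficiently sparse sequence makes the $\varlimsup$ strictly less than $1$), and the paper's own proof silently asserts the starred equalities along $\{\alpha_n\}$ without addressing this. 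Your proposed repair, however, overshoots in the other direction: if $\varlimsup_{A\uparrow T}$ is read as a supremum over \emph{all} increasing families in $\mathbf{A}$, the value is no longer $1$ --- already for $T=\Re_+^2$ the restriction of the sheet to a level set $\{A:\sigma_A=r\}$ is a stationary Gaussian process over an unbounded parameter set whose supremum is almost surely infinite, and even under the monotonicity constraint the unrestricted multiparameter LIL constant differs from the one-parameter one. The reading under which the statement is actually established (both in your argument and in the paper's) is the limit along the single continuous flow $\{B_{\pi(\alpha)}\}_{\alpha\ge 0}$ in which $\{A_n\}$ has been embedded, where the classical LIL applies verbatim; with that understanding your proof coincides with the paper's, and the residual quantifier issue is a defect of the statement rather than of your argument.
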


\begin{proof}
Let $A_{n}\uparrow T$. By Theorem \ref{thm1} and Remark \ref{rem1}
there exists a strictly increasing and continuous sequence
$\{B_{\alpha}\}_{\alpha\in\lbrack0,\infty)}\in\mathbf{A(u)}$ such that
$X^{B}$ is an $S_{[0,\infty)}$-time-changed Brownian motion when
$A_{n}=B_{n}$. (In other words, there exist a strictly increasing and
continuous sequence $\{B_{\alpha}\}_{\alpha\in\lbrack0,\infty)}\in
\mathbf{A(u)}$ and $\{\alpha_{n}\}\in\lbrack0,\infty)$ such that
$X^{B,\pi}$ is a Brownian motion when $A_{n}=B_{n}=B_{\pi(\alpha
_{n})}$ and $\pi\in S_{[0,\infty)}$)$.$ Then
\[
\varliminf_{A\uparrow T}
\frac{X_{A}}{\sqrt{2\sigma_{A}\ln\ln(\sigma_{A})}}
=
\varliminf_{\alpha_{n}\rightarrow\infty}
\frac{X_{\alpha_{n}}^{B,\pi}}{\sqrt{2\alpha_{n}\ln\ln (\alpha_{n})}}
\overset{\ast}{=}-1
\]
and
\[
\varlimsup_{A\uparrow T}
\frac{X_{A}}{\sqrt{2\sigma_{A}\ln\ln(\sigma_{A})}}
=
\varlimsup_{\alpha_{n}\rightarrow\infty}
\frac{X_{\alpha_{n}}^{B,\pi}}{\sqrt{2\alpha_{n}\ln\ln(\alpha_{n})}}\overset{\ast}{=}1
\]
for almost all $\omega$ (for the equality \(\overset{\ast}{=}\), see \cite{BoSa}).
\end{proof}

\begin{corollary}[The zero set]\label{cor9}
Let $\sigma\in M(\mathbf{A})$, and let $X=\{X_{A}:A\in\mathbf{A}\}$
be a set-indexed
Brownian motion with variance $\sigma$. Let $\omega\in\varOmega$ and
set
$Z_{\omega}=\{A\in\mathbf{A}:X_{A}(\omega)=0\}$; then the set
$Z_{\omega}$ is uncountable and
without monotone accumulation sets.
\end{corollary}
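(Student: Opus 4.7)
The plan is to reduce both assertions to the classical description of the zero set of a one-dimensional Brownian motion by means of Theorem \ref{thm1}, following exactly the ``divide and conquer'' template used in Corollaries \ref{cor7}--\ref{cor8}.

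First, I would fix, via Lemma \ref{lem2}(b) together with Remark \ref{rem1}, a strictly increasing and continuous sequence $\{A_\alpha\}_{\alpha\in[0,\infty)}$ in $\mathbf{A(u)}$. Theorem \ref{thm1} then supplies a bijection $\pi\in S_{[0,\infty)}$ for which $X^{\pi,A}_\alpha:=X_{A_{\pi(\alpha)}}$, $\alpha\geq 0$, is a standard one-dimensional Brownian motion. From the classical theory (see \cite{Fr,BoSa}), its zero set
\[
\mathcal{Z}_\omega := \bigl\{\alpha\geq 0 : X^{\pi,A}_\alpha(\omega)=0\bigr\}
\]
is, almost surely, a closed, perfect, nowhere dense subset of $[0,\infty)$ of Lebesgue measure zero, and in particular is uncountable.

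Uncountability of $Z_\omega$ follows immediately: since $\pi$ is a bijection and the family $\{A_\alpha\}$ is strictly increasing, the map $\alpha\mapsto A_{\pi(\alpha)}$ is injective, so $\{A_{\pi(\alpha)} : \alpha\in\mathcal{Z}_\omega\}$ is an uncountable subset of $Z_\omega$ for almost every $\omega$.

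For the absence of monotone accumulation sets I would argue by contradiction: suppose that a monotone sequence $\{D_n\}\subset Z_\omega$ converges in $\mathbf{A}$ to some $D$. By Remark \ref{rem1} I would embed $\{D_n\}$ (and $D$) into a strictly increasing and continuous sequence in $\mathbf{A(u)}$ and invoke Theorem \ref{thm1} once more to obtain a Brownian motion along which the parameter values $\alpha_n$ corresponding to $D_n$ form a monotone sequence of zeros accumulating at the value $\alpha$ corresponding to $D$. The classical fact that the zero set of Brownian motion is nowhere dense and has Lebesgue measure zero then rules out the postulated accumulation. The main obstacle I foresee is precisely this last step: one must verify that the transport $\alpha\mapsto A_{\pi(\alpha)}$ genuinely carries the set-valued monotone convergence in $\mathbf{A}$ to a one-parameter convergence in $[0,\infty)$ (so that the classical ``no monotone accumulation'' property of Brownian zeros actually applies); once that translation is made rigorous, Theorem \ref{thm1} closes the argument with no further analytic work.
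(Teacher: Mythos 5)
Your overall route is the same as the paper's: reduce to a classical one-parameter Brownian motion along a strictly increasing continuous sequence via Theorem \ref{thm1} and Remark \ref{rem1}, read off uncountability of $Z_{\omega}$ from the classical zero set, and treat monotone accumulation by contradiction after embedding the monotone sequence into such a sequence. The uncountability half is correct and matches the paper; your observation that $\alpha\mapsto A_{\pi(\alpha)}$ is injective is the only detail the paper leaves implicit. The transport issue you flag at the end (matching the set-monotone convergence $D_{n}\uparrow D$ with a parameter convergence $\alpha_{n}\uparrow\alpha$) is handled in the paper exactly as you propose, through Remark \ref{rem1} and the relation $A_{n}=B_{n}=B_{\pi(\alpha_{n})}$, so that is not where the difficulty lies.

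The genuine gap is in the classical fact you invoke to close the contradiction. You propose to rule out a monotone sequence of zeros $\alpha_{n}\uparrow\alpha$ by citing that the zero set of a Brownian motion is nowhere dense and of Lebesgue measure zero. Neither property excludes monotone accumulation points: the Cantor set is closed, nowhere dense and Lebesgue-null, yet every one of its points is the limit of a monotone sequence of other points of the set. Worse, you yourself record that the classical zero set is \emph{perfect}, i.e.\ every zero is an accumulation point of other zeros, so the structural facts you quote pull in the opposite direction from the conclusion you want. The paper does not attempt any such derivation; it imports directly from \cite{BoSa}, as a black box, a statement that the zero set of a one-parameter Brownian motion is ``without monotone accumulation sets,'' and contradicts that. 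Whatever the merits of that cited statement, your proposed substitute (nowhere density plus null measure) does not imply it, so the final step of your argument does not go through as written; to reproduce the paper's proof you must invoke the no-monotone-accumulation property itself rather than derive it from measure-theoretic smallness.
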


(The set $A\in\mathbf{A}$ is said to be a monotone
accumulation set if there exists an increasing (decreasing) sequence
$\{A_{n}\}\in Z_{\omega}$ such that $A_{n}\neq A$ and
\mbox{$A_{n}\uparrow A$ ($A_{n}\downarrow A)$}.)

\begin{proof}
From Theorem \ref{thm1} we conclude that the process $X^{A}$ is an
$S_{[0,\infty)}$-time-changed Brownian motion for all strict increasing
and continuous sequences $\{A_{\alpha}\}_{\alpha\in\lbrack0,\infty )}$
in $\mathbf{A(u)}$ (in other words, for all strict increasing and
continuous sequences $\{A_{\alpha}\}$ in $\mathbf{A(u)}$, there exists
$\pi\in S_{[0,\infty)}$ such that $X^{\pi,A}=\{X_{\pi\ast
A_{\alpha}}:\alpha\in\lbrack0,\infty)\}=\{X_{A_{\pi(\alpha
)}}:\alpha\in\lbrack0,\infty)\}$ is a Brownian motion). Then (see
\cite{BoSa}) the set $Z_{\omega}^{A}=\{\alpha\geq0:X_{\alpha}^{A,\pi
}(\omega)=0\}$ is uncountable and without monotone accumulation sets.
Thus, $Z_{\omega}$ is uncountable. The set $Z_{\omega}$ is without
monotone accumulation sets; if not, then there exists an increasing
sequence $\{A_{n}\}\in Z_{\omega}$ such that $A_{n}\neq A$ and
$A_{n}\uparrow A$, so that based on Theorem \ref{thm1}, it is easy to
see that there exists a strictly increasing and continuous sequence
$\{B_{\alpha }\}_{\alpha\in\lbrack0,\infty)}\in\mathbf{A(u)}$ such that
$X^{B}$ is an $S_{[0,\infty)}$-time-changed Brownian motion when
$A_{n}=B_{n}$. (In other words, there exist a strictly increasing and
continuous sequence $\{B_{\alpha}\}_{\alpha\in\lbrack0,\infty)}\in
\mathbf{A(u)}$ and $\{\alpha_{n}\}\in\lbrack0,\infty)$ such that
$X^{B,\pi}$ is a Brownian motion when $A_{n}=B_{n}=B_{\pi(\alpha
_{n})}$ and $\pi\in S_{[0,\infty)}$). Then the set $Z_{\omega
}^{B,\pi}=\{\alpha\geq0:X_{\alpha_{n}}^{B,\pi}(\omega)=0\}$ has a
monotone accumulation set, which is a contradiction (see \cite{BoSa}).
(In the same way, we can proceed in the case where $A_{n}\downarrow
A$).
\end{proof}

\section{Sequence-independent variation}

\begin{definition}\label{def10}
Let $\sigma$ be a positive and continuous measure in $\mathbf{A}$
(Radon measure). For $A\in\mathbf{A}$ and $\varepsilon
>0$, we define $D_{A}^{\varepsilon}=\{B\in\mathbf{A:}$ $A\subseteq
B$, $\sigma(B\backslash A)=\varepsilon\}$. Assume that
$D_{A}^{\varepsilon}\neq\emptyset$ and let $A^{\varepsilon}$ be an
element in $D_{A}^{\varepsilon}$.
\end{definition}

Hereafter, we assume that the space $T$ has a positive
and continuous measure~$\sigma$ in $\mathbf{A}$ such that for all
$A\in\mathbf{A}$, there exists $A^{\varepsilon}$ such that $\sigma
(A^{\varepsilon}\backslash A)=\varepsilon$ for all $\varepsilon>0$.

The classical example for definition is $T=\Re_{+}^{2}$ and $\mathbf
{A}=\mathbf{A}(\Re_{+}^{2})$ when $\sigma$ is a~Lebesgue or Radon
measure.\goodbreak

\begin{definition}\label{def11}
Let $X=\{X_{A}:A\in\mathbf{A}\}$
be a set-indexed stochastic process.
\begin{enumerate}
\item[\rm(a)] $X$ is said to have $\sigma$-stationary increments if
\[
X_{A_{1}^{\varepsilon}}-X_{A_{1}}\overset{d}{=}\cdots\overset{d}{=}
X_{A_{n}^{\varepsilon}}-X_{A_{n}}
\]
for all $\{A_{i}\}_{i=1}^{n}\in \mathbf{A}$, all $\varepsilon>0$, and all $A_{i}^{\varepsilon}\in
D_{A_{i}}^{\varepsilon}$ (the notation $\overset{d}{=}$ means the
equality in distribution).

\item[\rm(b)] $X$ is said to have independent increments if
$X_{C_{1}},\dots,X_{C_{n}}$ are independent random variables whenever
$C_{1},\dots,C_{n}$ are disjoint sets in $\mathbf{C}$.
\end{enumerate}
\end{definition}

Let $X=\{X_{t}:t\geq0\}$ be a square-integrable
martingale. It is known that we can associate with $X$ a unique
predictable process, denoted $\langle X\rangle$, such that
$X^{2}-\langle X\rangle$ is a martingale. Little is known in the
set-indexed case. However, the concept of increasing path allows us to
study such processes.

\begin{definition}\label{def12}
A square-integrable set-indexed martingale $X=\{X_{A}:A\in\mathbf{A}\}$
is said to have sequence-independent variation (s.i.v.) on
$\mathbf{A(u)}$ (or path-independent variation (p.i.v.)) if for any
strict increasing continuous sequences $\{A_{\alpha}\}_{\alpha\in
\lbrack a,b]}$ and $\{B_{\beta }\}_{\beta\in\lbrack a,b]}$ in
$\mathbf{A(u)}$ with $A_{a}=B_{a}$ and $A_{b}=B_{b}$, we have $\langle
X^{A}\rangle(b)=\langle X^{B}\rangle (b)$.
\end{definition}

\begin{remarks}
(a) This definition of
s.i.v. on $\mathbf{A(u)}$ was introduced by Cairoli and Walsh in the
plane \cite{CaWa}. Here we extend it to the set-indexed framework.

(b) The definition and more details about $\langle X\rangle$, can be
found in \cite{IvMe}.
\end{remarks}

\begin{theorem}\label{thm2}
Let $X=\{X_{A}:A\in\mathbf{A(\Re
}_{+}^{2})\}$ be a set-indexed strong martingale-$\mathbf{\Re
}_{+}^{2}$. If $\sup_{A\in\mathbf{A}}E[X_{A}^{4}]<\infty$ or the
filtration $F$ is generated by a Brownian motion, then $X$ has s.i.v.
on $\mathbf{A(u)}$.
\end{theorem}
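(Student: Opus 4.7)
\emph{Proof proposal.} The plan is to reduce the claim to the existence and additivity of a set-indexed compensator $\mu$ on $\mathbf{C}$ associated with the submartingale $X^2$, and then to identify $\langle X^A\rangle(b)$ with $\mu(A_b\backslash A_a)$, a quantity depending only on the pair of endpoints $(A_a,A_b)$ and not on the intermediate sequence.

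Fix two strictly increasing continuous sequences $\{A_\alpha\}_{\alpha\in[a,b]}$ and $\{B_\beta\}_{\beta\in[a,b]}$ in $\mathbf{A(u)}$ with $A_a=B_a$ and $A_b=B_b$. For any partition $a=\alpha_0<\cdots<\alpha_n=b$, set $C_i := A_{\alpha_i}\backslash A_{\alpha_{i-1}}\in\mathbf{C(u)}$. Since $A_{\alpha_{i-1}}\cap C_i=\varnothing$ we have $F_{A_{\alpha_{i-1}}}\subseteq\mathbf{G}^{\ast}_{C_i}$, so the strong-martingale identity $E[X_{C_i}\mid\mathbf{G}^{\ast}_{C_i}]=0$ gives $E[X^A_{\alpha_i}-X^A_{\alpha_{i-1}}\mid F_{A_{\alpha_{i-1}}}]=0$. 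Hence $X^A$ is a one-parameter square-integrable martingale in the induced filtration, and its bracket admits the standard $L^2$-representation
\[
\langle X^A\rangle(b) \;=\; \lim_{|\pi|\to 0}\, \sum_{i=1}^n E\bigl[X_{C_i}^{2}\,\big|\,F_{A_{\alpha_{i-1}}}\bigr].
\]

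I would then define the candidate compensator by $\mu(C):=E[X_C^2]$ for $C\in\mathbf{C}$, extended additively to $\mathbf{C(u)}$. Under either hypothesis, I would verify that $\mu$ is finitely additive on $\mathbf{C}$ and that the conditional quadratic sums above converge in $L^1$ to $\mu(A_b\backslash A_a)$. Orthogonality of strong-martingale increments over disjoint left-neighborhoods is what makes $\mu$ additive, and additivity in turn forces $\mu(A_b\backslash A_a)$ to depend only on $A_a$ and $A_b$. Consequently $\langle X^A\rangle(b)=\mu(A_b\backslash A_a)$; the same argument applied to $\{B_\beta\}$ yields $\langle X^B\rangle(b)=\mu(B_b\backslash B_a)=\mu(A_b\backslash A_a)$, i.e.\ $X$ has s.i.v.

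The main obstacle is the compensator construction itself: establishing that $\mu$ really is an additive set function on $\mathbf{C}$ and that the discrete sums $\sum_i E[X^2_{C_i}\mid F_{A_{\alpha_{i-1}}}]$ converge to $\mu(A_b\backslash A_a)$ in the set-indexed framework. In one parameter this is immediate from Doob--Meyer; in the two-parameter/set-indexed setting it requires the Cairoli--Walsh machinery, which in turn needs either the $L^4$-bound (to control the fourth-moment estimates that drive convergence of the bracket along refining partitions in all directions) or a Brownian filtration (which supplies an explicit stochastic-integral representation of $X$ and hence an explicit additive $\mu$). This is precisely where the two alternative hypotheses of the theorem are used; once $\mu$ is in hand, the remainder is a routine telescoping.
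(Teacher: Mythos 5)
Your overall architecture is the right one and matches the paper's: decompose $A_{\alpha_i}\backslash A_{\alpha_{i-1}}$ into disjoint elements of $\mathbf{C}$, use $F_{A_{\alpha_{i-1}}}\subseteq\mathbf{G}^{\ast}_{C}$ and the strong-martingale identity to kill the cross terms, and conclude that the bracket of $X^{A}$ is computed by an additive set function evaluated on $A_b\backslash A_a$, hence depends only on the endpoints. But your choice of candidate compensator, $\mu(C):=E[X_C^2]$, is a genuine error. That $\mu$ is deterministic, while the predictable bracket of a strong martingale is in general a \emph{random} increasing process: the conditional sums $\sum_i E[X_{C_i}^2\mid F_{A_{\alpha_{i-1}}}]$ are random and converge (when they converge) to the random quantity $\langle X\rangle_{A_b}-\langle X\rangle_{A_a}$, not to its expectation. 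Your claim that they converge in $L^1$ to the constant $E[X_{A_b\backslash A_a}^2]$ fails already for $X_z=\int_{[0,z]}\phi\,dW$ with $W$ a Brownian sheet and $\phi$ a bounded nonconstant predictable integrand, where $\langle X\rangle_z=\int_{[0,z]}\phi^2$ is genuinely random; such an $X$ satisfies both alternative hypotheses of the theorem. If your identification were correct, every such strong martingale would have deterministic bracket, which is far too strong.

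The repair is exactly what the paper does: replace your deterministic $\mu$ by the random set-indexed compensator $\langle X\rangle$ of Cairoli--Walsh, i.e.\ the additive (random) set function on the rectangles with $E[X_R^2\mid F_{A_s}]=E[\langle X\rangle_R\mid F_{A_s}]$ whenever $F_{A_s}\subseteq\mathbf{G}^{\ast}_R$. The two alternative hypotheses ($\sup_A E[X_A^4]<\infty$ or a Brownian filtration) are used precisely to guarantee the existence of this random compensator, not to force the bracket to be deterministic. With that object in hand, your telescoping argument goes through verbatim: $E[X_{A_t}^2-X_{A_s}^2\mid F_{A_s}]=\sum_i E[X_{R_i}^2\mid F_{A_s}]=E[\langle X\rangle_{A_t}-\langle X\rangle_{A_s}\mid F_{A_s}]$, so $\langle X^{A}\rangle(t)=\langle X\rangle_{A_t}-\langle X\rangle_{A_a}$ depends only on the sets $A_a$ and $A_t$, which is s.i.v.
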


\begin{proof}
It suffices to prove that for all increasing continuous sequences
$\{A_{\alpha}\}_{\alpha\in\lbrack a,b]}$ in $\mathbf{A(u)}$ and all
$a\leq s\leq t\leq b$,
$E[X_{A_{t}}^{2}-X_{A_{s}}^{2}|F_{A_{s}}]=E[\langle X\rangle
_{A_{t}}-\langle X\rangle_{A_{s}}|F_{A_{s}}]$. If $\{A_{\alpha
}\}_{\alpha\in\lbrack a,b]}$ is an increasing continuous sequence, then
$A_{s}\subseteq A_{t}$. The set $A_{t}\backslash A_{s}$ can be divided
to $n$ disjoint rectangles $\{R_{i}\}_{i=1}^{n}$ such that
$A_{t}\backslash A_{s}={\xch{\bigcup}{\cup}}_{i=1}^{n}R_{i}$. If $X$ is a strong
martingale-$\mathbf {\Re}_{+}^{2}$, then
$E[X_{A_{t}}^{2}-X_{A_{s}}^{2}|F_{A_{s}}]=E[(X_{A_{t}}-X_{A_{s}})^{2}|F_{A_{s}}]
$ and, by the division,
$E[(X_{A_{t}}-X_{A_{s}})^{2}|F_{A_{s}}]=E[(X_{R_{1}}+\cdots
+X_{R_{n}})^{2}|F_{A_{s}}]. $ It is clear that
$F_{A_{s}}\subseteq\mathbf{G}_{R_{i}}^{\ast}$ and for all $i\neq
j,E[X_{R_{i}}X_{R_{j}}|F_{A_{s}}]=E[E[X_{R_{i}}X_{R_{j}}|\mathbf{G}_{R_{i}}^{\ast}]|F_{A_{s}}]$
$=E[X_{R_{i}}E[X_{R_{j}}|\mathbf{G}_{R_{i}}^{\ast}]|F_{A_{s}}]=0$;
therefore,
$E[X_{A_{t}}^{2}-X_{A_{s}}^{2}|F_{A_{s}}]=E[X_{R_{1}}^{2}+\cdots\allowbreak
+X_{R_{n}}^{2}|F_{A_{s}}]\overset{\#}{=}E[\langle
X\rangle_{R_{1}}+\cdots+\langle X\rangle _{R_{n}}|F_{A_{s}}]=E[\langle
X\rangle_{A_{t}}-\langle X\rangle _{A_{s}}|F_{A_{s}}]$ (for the
equality \(\overset{\#}{=}\), see \cite{CaWa}).
\end{proof}

\begin{theorem}\label{thm3}
Let $\sigma\in M(\mathbf{A})$. If
$X=\{X_{A}:A\in\mathbf{A}\}$ is a set-indexed square-integrable
outer-continuous process with independent and $\sigma$-stationary
increments, then $X$ has s.i.v.
\end{theorem}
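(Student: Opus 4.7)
The plan is to reduce the s.i.v. property to a one-parameter calculation along any strictly increasing continuous sequence, by showing that the predictable quadratic variation of $X^A$ is just a constant multiple of the $\sigma$-measure of the underlying set.

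Fix a strictly increasing continuous sequence $\{A_\alpha\}_{\alpha\in[a,b]}$ in $\mathbf{A(u)}$ and consider the one-parameter process $X^A_\alpha=X_{A_\alpha}$. For $a\le s<t\le b$ the set $C_{s,t}=A_t\backslash A_s$ lies in $\mathbf{C(u)}$, and additivity gives $X^A_t-X^A_s=X_{C_{s,t}}$ almost surely. Disjoint time intervals produce disjoint elements of $\mathbf{C(u)}$, so the independent-increment hypothesis on $X$ immediately yields that $X^A$ is a one-parameter process with independent increments. By the $\sigma$-stationary-increment hypothesis (Definition \ref{def11}(a)), the distribution of $X_{C_{s,t}}$ depends only on $\sigma(C_{s,t})=\sigma(A_t)-\sigma(A_s)$.

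Define $f\colon[0,\infty)\to[0,\infty)$ by $f(\varepsilon)=\mathrm{Var}(X^A_t-X^A_s)$ for any pair with $\sigma(A_t)-\sigma(A_s)=\varepsilon$; this is well defined by the previous observation, and is independent of the sequence used. Independent increments of sizes $\varepsilon_1$ and $\varepsilon_2$ combine via additivity of variance to give $f(\varepsilon_1+\varepsilon_2)=f(\varepsilon_1)+f(\varepsilon_2)$, and monotonicity of $f$ follows by adding a nonnegative variance. A monotone additive function on $[0,\infty)$ is linear, so $f(\varepsilon)=c\varepsilon$ for some constant $c\ge 0$, where continuity at $0$ (needed to rule out pathological Hamel-basis solutions) is supplied by outer-continuity of $X$ together with the continuity of the sequence, which force $X_{A^\varepsilon}\to X_A$ in $L^2$ as $\varepsilon\downarrow 0$.

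Now recall that if $M$ is a square-integrable one-parameter process with independent increments, then the centered process $\tilde M_t=M_t-E[M_t]$ is a continuous martingale (continuity coming from outer-continuity of $X$ via the machinery cited after Definition \ref{def6}), and the compensator of $\tilde M^2$ is the deterministic function $t\mapsto\mathrm{Var}(M_t-M_a)$. Applied to $X^A$ this gives
\[
\langle X^A\rangle(b)=f\bigl(\sigma(A_b)-\sigma(A_a)\bigr)=c\bigl(\sigma(A_b)-\sigma(A_a)\bigr),
\]
and the same computation for $B$ yields $\langle X^B\rangle(b)=c(\sigma(B_b)-\sigma(B_a))$. Because the hypotheses $A_a=B_a$ and $A_b=B_b$ force $\sigma(A_a)=\sigma(B_a)$ and $\sigma(A_b)=\sigma(B_b)$, we obtain $\langle X^A\rangle(b)=\langle X^B\rangle(b)$, which is exactly the s.i.v. property.

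The main technical obstacle I anticipate is the identification of $\langle X^A\rangle$ with the deterministic variance function. The classical argument is standard once we have a continuous square-integrable martingale with independent increments, so the burden is really to justify the centering step and to transfer outer-continuity of the set-indexed $X$ to sample-path continuity of $X^A$, so that the compensator from the remark following Definition \ref{def9} genuinely exists and equals $c(\sigma(A_t)-\sigma(A_a))$.
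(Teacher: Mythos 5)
Your argument is correct in substance but follows a noticeably different route from the paper's. The paper proves two auxiliary propositions: first that $X^{A}$ is a $G$-time-changed right-continuous martingale with independent \emph{and stationary} increments (the group action $\pi$ reparametrizes the sequence so that $\sigma(A_{\pi(t)})=t$), and second that this time change carries the endpoint $A_{c}$ to the intrinsic time $k=\sigma(A_{c})$; it then invokes the classical fact that a square-integrable right-continuous martingale with independent and stationary increments satisfies $\langle M\rangle_{t}=tE[M_{1}^{2}]$, so both time-changed compensators are the same deterministic linear function evaluated at the same $k$. You bypass the group action entirely: you stay in the original parametrization, show the variance of an increment is a function $f$ of its $\sigma$-measure alone, derive $f(\varepsilon)=c\varepsilon$ from additivity plus monotonicity (note that monotonicity already rules out the Hamel-basis pathologies, so your appeal to $L^{2}$-continuity at $0$ is welcome but redundant), and identify $\langle X^{A}\rangle$ with the deterministic variance function via Doob--Meyer uniqueness. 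The two arguments hinge on the same classical fact --- the compensator of a square-integrable independent-increment martingale is its deterministic variance --- but yours trades the paper's time-change machinery (Propositions \ref{prop1} and \ref{prop2}) for a Cauchy-functional-equation step, and your explicit centering of $X^{A}$ is actually more careful than the paper's bare assertion that independent increments make $X$ a (strong) martingale. One shared imprecision you inherit rather than introduce: Definition \ref{def11}(a) constrains only increments $X_{A^{\varepsilon}}-X_{A}$ with $A,A^{\varepsilon}\in\mathbf{A}$, whereas both you and the paper apply $\sigma$-stationarity to increments $X_{A_{t}}-X_{A_{s}}$ with $A_{s},A_{t}\in\mathbf{A(u)}$; a fully rigorous version would decompose $A_{t}\backslash A_{s}$ into left neighborhoods and combine stationarity with independence on the pieces.
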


For the proof, we need two auxiliary propositions.

\begin{proposition}\label{prop1}
If $\{A_{\alpha}\}_{\alpha \in\lbrack0,\infty)}$ is a strictly
increasing continuous sequence in $\mathbf{A(u)}$, then $X^{A}$ is a
$G$-time-changed right-continuous martingale with independent and
stationary increments. (In other words, for all strictly increasing
continuous sequences\vadjust{\goodbreak} $\{A_{\alpha}\}_{\alpha\in \lbrack0,\infty)}$ in
$\mathbf{A(u)}$, there exists $\pi\in G$ such that $X^{\pi,A}$ is a
right-continuous martingale with independent and stationary
increments).
\end{proposition}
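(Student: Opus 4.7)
The plan is to produce the time change $\pi$ by setting the clock equal to the $\sigma$-content of the sets $A_\alpha$, and then to transfer each of the four required properties of $Y := X^{\pi,A}$ from the corresponding set-indexed hypothesis on $X$. Concretely, define $\theta : [0,\infty) \to [0,\infty)$ by $\theta(\alpha)=\sigma(A_\alpha)$. Because $A$ is strictly increasing and continuous in $\mathbf{A(u)}$ and $\sigma$, extended additively, is strictly monotone and continuous on $\mathbf{A(u)}$, the map $\theta$ is strictly increasing and continuous. Take $\pi=\theta^{-1}$ on the range of $\theta$ and extend by the identity outside it so that $\pi \in S_{[0,\infty)}$; then $\sigma(A_{\pi(\alpha)})=\alpha$ on the range of $\theta$. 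This is exactly the time change already used in the ``if'' direction of Theorem~\ref{thm1}.

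Writing $Y_\alpha := X_{A_{\pi(\alpha)}}$, I would verify the properties in turn. For independent increments on disjoint intervals $(s_i,t_i]$, the sets $C_i := A_{\pi(t_i)} \setminus A_{\pi(s_i)}$ lie in $\mathbf{C(u)}$ and are pairwise disjoint by strict increase of $A$; additivity of $X$ gives $Y_{t_i}-Y_{s_i} = X_{C_i}$, and Definition~\ref{def11}(b) yields independence. For stationary increments, additivity of $\sigma$ gives $\sigma(C_i)=t_i-s_i$, so $A_{\pi(t_i)} \in D_{A_{\pi(s_i)}}^{t_i-s_i}$, and Definition~\ref{def11}(a) with $\varepsilon=t_i-s_i$ forces the distribution of $Y_{t_i}-Y_{s_i}$ to depend only on $t_i-s_i$. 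Right-continuity follows from continuity of $\pi$, the relation $A_{\pi(\alpha)} = \bigcap_{v>\pi(\alpha)} A_v$ from Definition~\ref{def6}(b), and outer-continuity of $X$.

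The martingale claim is the main point of care. For $s<t$, $Y_t-Y_s = X_C$ with $C = A_{\pi(t)} \setminus A_{\pi(s)}$ disjoint from every $A_{\pi(u)}$, $u \leq s$; by independent increments of $X$, the variable $X_C$ is independent of the natural filtration $F^Y_s$, so $E[Y_t-Y_s \mid F^Y_s] = E[X_C]$. Using $\sigma$-stationarity, $E[X_C]$ depends only on $\sigma(C)=t-s$, whence $\alpha \mapsto E[Y_\alpha]$ is linear. Subtracting this deterministic drift (or invoking the zero-mean convention implicit in the paper's Brownian-motion setting) makes $Y$ a genuine martingale while preserving independent and stationary increments. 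This centering step is the main obstacle of the proof, since the hypothesis in Theorem~\ref{thm3} does not explicitly assert zero-mean increments; a secondary technicality is extending $\pi$ to all of $[0,\infty)$ when $\sup_\alpha \sigma(A_\alpha)$ is finite, handled routinely by the identity extension indicated above, which does not affect $Y$ on the relevant interval.
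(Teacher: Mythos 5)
Your construction of the time change and your verification of right-continuity, independent increments, and stationary increments match the paper's argument in substance: the paper likewise derives right-continuity of $X^{A}$ from outer-continuity of $X$ and derives stationarity of the time-changed increments from $\sigma$-stationarity, and your explicit $\pi=\theta^{-1}$ with $\theta(\alpha)=\sigma(A_{\alpha})$ is exactly the clock used in the ``if'' direction of Theorem~\ref{thm1}, which the paper reuses implicitly here and again in Proposition~\ref{prop2}. Where you genuinely diverge is the martingale step. The paper disposes of it in one line by citing \cite{IvMe}: a process with independent increments is a strong martingale, hence a martingale, hence $X^{A}$ is a martingale with respect to the set-indexed filtration $\{F_{A_{\alpha}}\}$. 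You instead compute $E[Y_{t}-Y_{s}\mid F^{Y}_{s}]$ directly from Definition~\ref{def11}(b); this is more self-contained, though it yields the martingale property only for the natural filtration of $Y$ (which suffices for the use made of $\langle X^{A}\rangle$ in Theorem~\ref{thm3}). Your computation has the virtue of exposing the hidden hypothesis: without $E[X_{C}]=0$ the martingale conclusion fails, and the appeal to \cite{IvMe} tacitly assumes centered increments. But be careful with your first proposed remedy: subtracting the deterministic drift produces a martingale that is no longer the process $X^{\pi,A}$ of the statement, so it proves a different proposition. The correct resolution is your second alternative --- the zero-mean convention is part of what ``independent increments'' means in the cited framework --- and that should be asserted as the hypothesis in force rather than offered as one of two interchangeable options.
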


\begin{proof}
The process $X$ is outer-continuous; therefore, $X^{A}$ is
right-continuous. Since $X$ has independent increments, it is a strong
martingale. In particular, $X$ is a martingale \cite{IvMe}. It is easy
to see that $X^{A}$ is a martingale for all strict continuous sequences
$\{A_{\alpha}\}_{\alpha\in\lbrack0,\infty)}$ in $\mathbf{A(u)}$.
Moreover, $X$ has $\sigma$-stationary increments; therefore, $X^{A}$ is
a $G$-time-changed right-continuous martingale with independent and
stationary increments for all strict continuous sequences $\{A_{\alpha
}\}_{\alpha\in\lbrack0,\infty)}$ in $\mathbf{A(u)}$.
\end{proof}

Now, for any increasing continuous sequences $\{A_{\alpha}\}_{\alpha\in
\lbrack0,\infty)}$ and $\{B_{\beta}\}_{\beta\in\lbrack0,\infty )}$ in
$\mathbf{A(u)}$, $X^{\pi_{1},A}$ and $X^{\pi_{2},B}$ are
right-continuous martingales with independent and stationary
increments. We recall that if $X=\{X_{t}:t\geq0\}$ is a
right-continuous martingale with independent and stationary increments
such that $E[X_{t}^{2}]<\infty$ for all~$t$, then $\langle X\rangle
_{t}=tE[X_{1}^{2}]$ for all $t$. Thus, $\langle X^{\pi_{1},A}\rangle $,
$\langle X^{\pi_{2},B}\rangle$ are deterministic; in particular,
$\langle X^{\pi_{1},A}\rangle(c)=\langle X^{\pi_{2},B}\rangle(c)$ when
$\pi_{1}\ast A_{c}=\pi_{2}\ast B_{c}$, and for all $0\leq t$, $\langle
X^{\pi_{1},A}\rangle(c)=\sigma(A_{\pi_{1}(t)})$.

\begin{proposition} \label{prop2}
If $\sigma(A_{c})=k$, then there exists a unique $s\in\Re_{+}$ such
that \mbox{$A_{\pi _{1}(s)}=A_{c}$} and $s=k$ for all a strict continuous
sequences $\{A_{\alpha}\}_{\alpha\in\lbrack0,\infty)}$ in
$\mathbf{A(u)}$.
\end{proposition}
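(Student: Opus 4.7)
The plan is to split the claim into two parts: the purely set-theoretic statement that a unique $s$ with $A_{\pi_{1}(s)}=A_{c}$ exists, and the quantitative identity $s=k$.

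The first part is formal. Since $\pi_{1}\in S_{[0,\infty)}$ is a bijection of $[0,\infty)$ and the sequence $\{A_{\alpha}\}$ is strictly increasing (so $\alpha\mapsto A_{\alpha}$ is injective), the equation $A_{\pi_{1}(s)}=A_{c}$ reduces to $\pi_{1}(s)=c$, whose unique solution is $s=\pi_{1}^{-1}(c)$.

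For the identity $s=k$, I would use the computation carried out in the paragraph immediately preceding the proposition. By Proposition~\ref{prop1}, $X^{\pi_{1},A}$ is a right-continuous square-integrable martingale with independent and stationary increments, so the classical identity gives $\langle X^{\pi_{1},A}\rangle_{t}=t\,E[(X^{\pi_{1},A}_{1})^{2}]$, i.e.\ the bracket grows linearly in $t$. On the other hand, the same paragraph establishes $\langle X^{\pi_{1},A}\rangle_{t}=\sigma(A_{\pi_{1}(t)})$, obtained from $\sigma$-stationarity and independence of the increments of $X$ together with continuity of the sequence (and the natural normalization of $\sigma$). Combining the two representations forces $\sigma(A_{\pi_{1}(t)})=t$ for every $t\geq 0$; equivalently, $\pi_{1}=\theta^{-1}$ where $\theta(\alpha):=\sigma(A_{\alpha})$. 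Evaluating at $s=\pi_{1}^{-1}(c)$ then yields $s=\theta(c)=\sigma(A_{c})=k$.

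The main obstacle is pinning down the specific form of $\pi_{1}$. Proposition~\ref{prop1} only produces \emph{some} bijection turning $X^{A}$ into a process with stationary increments; the reason it must satisfy $\sigma(A_{\pi_{1}(t)})=t$ is that stationarity of the increments of $X^{\pi_{1},A}$ requires $\sigma(A_{\pi_{1}(t)}\backslash A_{\pi_{1}(s)})$ to depend only on $t-s$, and strict monotonicity and continuity of $\sigma\circ A_{\cdot}$ leave no alternative. Once this identification is in place, the conclusion $s=k$ is immediate from $s=\pi_{1}^{-1}(c)=\theta(c)=\sigma(A_{c})$.
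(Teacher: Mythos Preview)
Your existence/uniqueness computation is fine. The problem lies in how you reach $s=k$.

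The paper's proof is essentially one line: it states that $\pi_1$ is, \emph{by construction}, the map $\theta^{-1}$ with $\theta(\alpha)=\sigma(A_\alpha)$ (the same choice made explicitly in the proof of Theorem~\ref{thm1}). Then $\sigma(A_{\pi_1(k)})=\theta(\theta^{-1}(k))=k=\sigma(A_c)$, and strict monotonicity of $\sigma$ on the strictly increasing sequence forces $A_{\pi_1(k)}=A_c$, hence $s=k$.

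You instead treat $\pi_1$ as an unknown bijection satisfying the conclusion of Proposition~\ref{prop1} and attempt to \emph{derive} $\pi_1=\theta^{-1}$. This cannot succeed: the bijection produced by Proposition~\ref{prop1} is not unique. If $X^{\pi_1,A}$ has stationary independent increments, then so does $X^{\tilde\pi_1,A}$ with $\tilde\pi_1(t)=\pi_1(\lambda t)$ for any $\lambda>0$, and for this $\tilde\pi_1$ the unique $s$ with $A_{\tilde\pi_1(s)}=A_c$ is $k/\lambda$, not $k$. Concretely, your two bracket representations combine to give only $\sigma(A_{\pi_1(t)})=t\,E[(X^{\pi_1,A}_1)^{2}]$, i.e.\ proportionality; your phrase ``natural normalization of $\sigma$'' is precisely the missing constant. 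The identity $\langle X^{\pi_1,A}\rangle_t=\sigma(A_{\pi_1(t)})$ that you cite from the preceding paragraph is not established there independently of the choice $\pi_1=\theta^{-1}$ --- it is a consequence of that choice together with a normalization. So the fix is not to sharpen the bracket argument but to recognize that Proposition~\ref{prop2} is a statement about the \emph{particular} $\pi_1=\theta^{-1}$; once that is accepted, your own final line $s=\pi_1^{-1}(c)=\theta(c)=\sigma(A_c)=k$ is the whole proof.
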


\begin{proof}
It is clear that $\sigma(A_{\pi
_{1}(k)})=k$ from the definition of $\pi_{1}$ ($\pi_{1}(\alpha
)=\theta^{-1}(\alpha)$ when $\theta:\Re_{+}\rightarrow\Re
_{+}$, $\theta(\alpha)=\sigma(A_{\alpha})$); therefore, $A_{\pi
_{1}(k)}=A_{c}$ or there exists $r\neq k$ when $A_{\pi
_{1}(r)}=A_{c}$ and $A_{\pi_{1}(r)}\neq A_{\pi_{1}(k)}$ because of
strictly increasing continuous sequences. Without loss of
generality, we may assume that $A_{\pi_{1}(r)}\subset A_{\pi_{1}(k)}$
when
$\sigma(A_{\pi_{1}(r)})= \sigma(A_{\pi_{1}(k)})$, which is a
contradiction to $\sigma\in M(\mathbf{A})$.
\end{proof}

\begin{proof}[Proof of Theorem \ref{thm3}]
Let $\{A_{\alpha}\}_{\alpha
\in\lbrack0,\infty)}$ and $\{B_{\beta}\}_{\beta\in\lbrack
0,\infty)}$ in $\mathbf{A(u)}$ be two strictly
increasing continuous sequences. If  $A_{0}=B_{0}$ and $A_{c}=B_{c}$,
then
$\sigma(A_{c})=\sigma(B_{c})=k$.
From Proposition \ref{prop2} we get that if
$A_{c}=A_{\pi_{1}(k)},B_{c}=B_{\pi_{2}(k)}$, then $\langle
X^{A}\rangle(c)=\langle X^{\pi_{1},A}\rangle(k)=\langle X^{\pi
_{2},B}\rangle(k)=\langle X^{B}\rangle(c)$.
\end{proof}

\begin{theorem}\label{thm4}
Let $\sigma\in M(\mathbf{A})$, and
let $X=\{X_{A}:A\in\mathbf{A}\}$ be a set-indexed Brownian motion
with variance $\sigma$. Then $X$ has s.i.v, and $\langle X\rangle
_{A}=\sigma_{A}$ for all $A\in\mathbf{A}$.
\end{theorem}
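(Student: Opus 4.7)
The plan is to obtain the two conclusions separately by leveraging the earlier results: sequence-independent variation will follow from Theorem \ref{thm3}, and the identity $\langle X\rangle_{A}=\sigma_{A}$ will follow from Theorem \ref{thm1} by transferring $\langle\cdot\rangle$ through the explicit time change used in the proof of the ``only if'' direction of that theorem.

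First I would check that a set-indexed Brownian motion with variance $\sigma$ satisfies the hypotheses of Theorem \ref{thm3}. Square-integrability and outer-continuity are part of the standard construction of a set-indexed Brownian motion in \cite{IvMe}. Independent increments are built into Definition \ref{def7} (the random variables $X_{C_{1}},\dots,X_{C_{n}}$ are independent on disjoint $C_{i}\in\mathbf{C}$). For $\sigma$-stationary increments in the sense of Definition \ref{def11}(a), observe that by additivity $X_{A^{\varepsilon}}-X_{A}=X_{A^{\varepsilon}\setminus A}$ (almost surely) and, by Definition \ref{def7}, $X_{A^{\varepsilon}\setminus A}\sim N(0,\sigma(A^{\varepsilon}\setminus A))=N(0,\varepsilon)$, so the distribution depends only on $\varepsilon$. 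Theorem \ref{thm3} then yields that $X$ has s.i.v.

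Next I would identify $\langle X\rangle_{A}$ with $\sigma_{A}$. Given $A\in\mathbf{A}$, apply Lemma \ref{lem2} (or Remark \ref{rem1}) to the finite subsemilattice $\{\varnothing^{\prime},A\}$ to obtain a strictly increasing continuous sequence $\{A_{\alpha}\}_{\alpha\in[0,\infty)}$ in $\mathbf{A(u)}$ with $A_{0}=\varnothing^{\prime}$ and $A_{c}=A$ for some $c>0$. By Theorem \ref{thm1} there exists $\pi\in S_{[0,\infty)}$ such that $X^{\pi,A}=\{X_{A_{\pi(\alpha)}}\}$ is a Brownian motion; moreover, the $\pi$ constructed there satisfies $\sigma(A_{\pi(\alpha)})=\alpha$, so $X^{\pi,A}$ is a standard Brownian motion and $\langle X^{\pi,A}\rangle(\alpha)=\alpha$. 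Writing $\theta(\alpha)=\sigma(A_{\alpha})=\pi^{-1}(\alpha)$, we have $X^{A}_{\beta}=X^{\pi,A}_{\theta(\beta)}$, so by the usual behavior of the angle bracket under a deterministic, strictly increasing and continuous time change,
\[
\langle X^{A}\rangle(c)=\langle X^{\pi,A}\rangle\bigl(\theta(c)\bigr)=\theta(c)=\sigma(A_{c})=\sigma(A).
\]
By the s.i.v. property established above, $\langle X^{A}\rangle(c)$ depends only on the endpoints $A_{0}=\varnothing^{\prime}$ and $A_{c}=A$, and not on the particular sequence used. Hence it is legitimate to set $\langle X\rangle_{A}:=\langle X^{A}\rangle(c)=\sigma_{A}$.

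The main technical point to be careful with is the interplay between the two ingredients: one needs s.i.v. in order for the prescription $\langle X\rangle_{A}=\langle X^{A}\rangle(c)$ to be independent of the choice of strictly increasing continuous sequence through $A$, while the explicit value $\sigma(A)$ is delivered by Theorem \ref{thm1} via the time change $\pi$. A secondary subtlety is the invariance of the angle bracket under the deterministic time change $\theta$; once we know $X^{\pi,A}$ is standard Brownian motion, this is routine, but it should be cited (rather than recomputed) to keep the proof short. With these pieces in place, the theorem follows immediately.
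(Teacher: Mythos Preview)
Your proposal is correct. The only notable difference from the paper's own argument is in how you obtain s.i.v.: you verify the hypotheses of Theorem~\ref{thm3} (square-integrability, outer-continuity, independent and $\sigma$-stationary increments) and invoke it directly, whereas the paper instead appeals to Theorem~\ref{thm1} to see that $X^{\pi_{1},A}$ and $X^{\pi_{2},B}$ are one-parameter Brownian motions with deterministic brackets and then uses Proposition~\ref{prop2} (the key step inside the proof of Theorem~\ref{thm3}) to match endpoints. Your route is slightly more economical and makes the dependence on Theorem~\ref{thm3} explicit; the paper's route keeps the whole argument anchored in Theorem~\ref{thm1} and thereby avoids having to re-check outer-continuity and $\sigma$-stationarity separately. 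For the identification $\langle X\rangle_{A}=\sigma_{A}$ your argument coincides with the paper's: pick a strictly increasing continuous sequence through $A$, use the time change $\pi$ from Theorem~\ref{thm1} so that $X^{\pi,A}$ is standard Brownian motion with $\langle X^{\pi,A}\rangle(\alpha)=\alpha$, and pull back via $\theta=\pi^{-1}$ to get $\langle X^{A}\rangle(c)=\sigma(A_{c})=\sigma(A)$, with s.i.v.\ guaranteeing independence of the chosen sequence.
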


\begin{proof}
Since $X$ is a set-indexed Brownian motion, by Theorem \ref{thm1}
$X^{A}$ is a $G_{[0,\infty)}$-time-changed Brownian motion for all
strictly continuous sequences
$\{A_{\alpha}\}_{\alpha\in\lbrack0,\infty)}$ in $\mathbf
{A}(\mathbf{u})$ (in other words, for all strictly increasing and
continuous sequences $\{A_{\alpha}\}_{\alpha\in\lbrack0,\infty)}$ in
$\mathbf{A(u)}, $ there exists $\pi\in S_{[0,\infty)}$ such that
$X^{\pi,A}=\{X_{\pi\ast A_{\alpha}}:\alpha\in\lbrack0,\infty
)\}=\{X_{A_{\pi(\alpha)}}:\alpha\in\lbrack0,\infty)\}$ is a Brownian
motion). For any increasing\ continuous sequences
$\{A_{\alpha}\}_{\alpha\in\lbrack 0,\infty)}$ and
$\{B_{\beta}\}_{\beta\in\lbrack0,\infty)}$ in $\mathbf{A}(\mathbf{u})$,
$X^{\pi_{1},A}$ and $X^{\pi_{2},B}$ are Brownian motions; therefore,
$\langle X^{\pi_{1},A}\rangle$ and $\langle X^{\pi_{2},B}\rangle$ are
deterministic; in particular, $\langle X^{\pi_{1},A}\rangle(c)=\langle
X^{\pi_{2},B}\rangle(c)$ when $A_{\pi_{1}(c)}=B_{\pi_{2}(c)}$, and for
all $0\leq t$, $\langle
X^{\pi_{1},A}\rangle(c)=\sigma(A_{\pi_{1}(t)})$.

Let $\{A_{\alpha}\}_{\alpha\in\lbrack0,\infty)}$
and $\{B_{\beta}\}_{\beta\in\lbrack0,\infty)}$ in $\mathbf{A(u)}$
be two increasing continuous sequences with $A_{0}=B_{0}$ and
$A_{c}=B_{c}$ then $\sigma(A_{\pi_{1}(c)})= \sigma(B_{\pi
_{2}(c)})=k$

Returning to the proof of Theorem \ref{thm4}, from Proposition \ref{prop2} in Theorem \ref{thm3} 
we get that if $A_{c}=A_{\pi_{1}(k)}$ and $B_{c}=B_{\pi_{2}(k)}$,
then $\langle X^{A}\rangle(c)=\langle X^{\pi_{1},A}\rangle
(k)=\langle X^{\pi_{2},B}\rangle(k)=\langle X^{B}\rangle(c)$ and, in
particular, $\langle X^{A}\rangle(t)=\sigma(A_{t})$ for all $0\leq
t$.

Now, if $D\in\mathbf{A}$, then there exist a continuous sequence
$\{A_{\alpha}\}_{\alpha\in\lbrack0,\infty)}$ in $\mathbf
{A}(\mathbf{u})$ and $d\in\Re_{+}$ such that $A_{d}=D$. Then, by
Theorem \ref{thm1}, $X^{\pi,A}$ is a $G_{[0,\infty)}$-time-changed
Brownian motion, and $\langle X\rangle_{D}=\langle
X^{\pi,A}\rangle_{d}=\langle X^{\pi
}\rangle_{A_{d}}=\sigma_{A_{d}}=\sigma_{D}$.
\end{proof}

%

\end{document}